\pgfplotsset{compat=1.3}
 \newcommand{\mc}{\mathcal}
\newcommand{\reals}{{\mathbb R}}
\newcommand{\ones}{\mathbbm{1}}
\newcommand{\ms}{\scriptscriptstyle}
\newcommand{\Etree}{E_{{\ms \mc{T}}}}
\newcommand{\Eforrest}{E_{{\ms \mc{F}}}}
\newcommand{\Ecycle}{E_{{\ms \mc{C}}}}
\newcommand{\Etreep}{E_{{\ms \mc{T}_+}}}
\newcommand{\Eforrestp}{E_{{\ms \mc{F}_+}}}
\newcommand{\Eforrestm}{E_{{\ms \mc{F}_-}}}
\newcommand{\Tcycle}{T_{{\ms (\mc{T},\mc{C})}}}
\newcommand{\Tcyclef}{T_{{\ms (\mc{F},\mc{C})}}}
\newcommand{\Tplus}{T_{{\ms (\mc{F}_+,\mc{C}_+)}}}
\newcommand{\Tminus}{T_{{\ms (\mc{F}_{-},\mc{C}_{-})}}}
\newcommand{\R}{R_{\ms (\mc{T},\mc{C})}}
\newcommand{\Rf}{R_{\ms (\mc{F},\mc{C})}}
\newcommand{\Rplus}{R_{\ms (\mc{F}_{+},\mc{C}_{+})}}
\newcommand{\Rtplus}{R_{\ms (\mc{T}_{+},\mc{C}_{+})}}
\newcommand{\Rminus}{R_{\ms (\mc{F}_{-},\mc{C}_{-})}}
\newcommand{\Edgelapf}{L_e(\mc{F})R_{\ms (\mc{F},\mc{C})}WR_{\ms (\mc{F},\mc{C})}^T}
\newcommand{\w}{\mathrm v}
 \newtheorem{theorem}{Theorem}[section]
 \newtheorem{proposition}[theorem]{Proposition}
 \newtheorem{corollary}[theorem]{Corollary}
 \newtheorem{definition}[theorem]{Definition}
 \newtheorem{lemma}[theorem]{Lemma}
 \def\QEDclosed{\mbox{\rule[0pt]{1.3ex}{1.3ex}}} 
\newcommand{\bea}{\begin{eqnarray}}
\newcommand{\eea}{\end{eqnarray}}
\newcommand{\beas}{\begin{eqnarray*}}
\newcommand{\eeas}{\end{eqnarray*}}
\newcommand{\leftm}{\left[\begin{array}}
\newcommand{\rightm}{\end{array}\right]}
 \def\QEDclosed{\mbox{\rule[0pt]{1.3ex}{1.3ex}}} 
\newcommand{\margin}[1]{\marginpar{\tiny\color{red} #1}}
\newcommand{\MBremove}[1]{\margin{removed by MB}}
\newcommand\oprocendsymbol{\hbox{$\square$}}
\newcommand\oprocend{\relax\ifmmode\else\unskip\hfill\fi\oprocendsymbol}
\title{\LARGE \bf On the Robustness of Uncertain Consensus Networks}
\author{Daniel Zelazo$^1$ and Mathias B\"urger$^2$
\thanks{$^1$Daniel Zelazo is with the Department of Aerospace Engineering, Israel Institute of Technology, Israel.  
        {\tt\small dzelazo@technion.ac.il}.}
\thanks{$^2$Mathias B\"urger is with the Cognitive Systems Group at Robert Bosch GmbH
        {\tt\small mathias.buerger@de.bosch.com}.}%
}
\begin{document}
\maketitle
\thispagestyle{empty}
\pagestyle{empty}

\begin{abstract}
This work considers the robustness of uncertain consensus networks.  The first set of results studies the stability properties of consensus networks with negative edge weights.  We show that if either the negative weight edges form a cut in the graph, or any single negative edge weight has magnitude less than the inverse of the effective resistance between the two incident nodes, then the resulting network is unstable.  These results are then applied to analyze the robustness properties of the consensus network with additive but bounded perturbations of the edge weights.  It is shown that the small-gain condition is related again to cuts in the graph and effective resistance.  For the single edge case, the small-gain condition is also shown to be exact.  The results are then extended to consensus networks with non-linear couplings.
\end{abstract}

\section{Introduction}

The consensus protocol has recently emerged as a canonical model for the study of \emph{networked dynamic systems}.  In the linear setting, the consensus protocol, comprised of a collection of single integrator dynamic agents interacting over an information exchange network (the graph), has been studied from both dynamic systems and graph theoretic perspectives~\cite{Mesbahi2010}.  The most basic setting considered in consensus networks assumes an undirected connected graph with non-negative weights on the edges of the graph. In such a setting, it is well known that the trajectories of all agents in the network converge to a common value.  The use of edge weights in these networks often arise from the modeling of physical processes \cite{Newman2003}, or as a design parameter used to improve certain performance metrics \cite{Shafi2010, Xiao2004}.

Recently, there has been a growing interest in multi-agent networks containing negative edge weights.  For example, in the work \cite{Xiao2004} it was shown that negative edge weights can appear as an optimal solution for finding the fastest converging linear iteration used in distributed averaging.   Negative edge weights in problems related to the control of multi-agent systems can also lead to steady-state configurations that are \emph{clustering} \cite{Qin2013, Xia2011a}.  In \cite{Altafini2013b}, negative weights are used to model antagonistic interactions in a social network and conditions are provided for when such weights lead to \emph{bipartite consensus}.  The work of \cite{Bronski2014} provides bounds on the number of positive, negative, and zero eigenvalues of the weighted Laplacian matrix with negative weights.

An important issue that has not received much attention in the controls community is the \emph{robustness} of these linear weighted consensus protocols.  Indeed, even in the most basic setups, the trajectories generated by a weighted consensus protocol can be very rich including steady-state trajectories that are synchronized, clustering, or unstable.   If the weights in a consensus network arise from its physical modeling, then one must consider the performance of such systems when the weights are not known exactly.  Similarly, if the weights are designed as an engineered parameter, then it is important to consider the robustness of the system in the presence of malicious attacks on the network, one of which could be the manipulation of the nominally designed edge weights.

The question of robustness in this context relates to the underlying graph.  Recent works have analyzed robustness of synchronization networks with random link failures \cite{Diwadkar2011}, particular families of graphs in the context of the $\mc{H}_2$ performance \cite{Young2011}, and uncertain tree structures \cite{Sandhu2005}. In \cite{Trentelman2013}, uncertainties in the dynamics of each agent comprising a networked system was studied in the context of the small-gain theorem.  However, these works do not address the fundamental \emph{combinatorial} aspect that can arise in uncertain networked systems.

This then motivates the main contributions of this work.  We aim to study the robustness properties of linear weighted consensus protocols and provide graph-theoretic interpretations of the results.  We consider consensus networks where the nominal edge weights are subject to some bounded additive perturbations.  At the heart of this analysis is an algebraic and graph-theoretic characterization of the definiteness of the weighted graph Laplacian with negative edge weights.  We then apply these results to a linear consensus protocol in the context of the celebrated \emph{small gain theorem}.  We show that in certain cases, the small gain theorem provides an exact robustness margin for the uncertain consensus network.  Furthermore, this measure turns out to be related to the notion of the \emph{effective resistance} of a graph.  These results are also extended to consider edge weights with nonlinear, but sector bounded, perturbations.  

The organization of this paper is as follows.  In Section \ref{sec:edgelap}, fundamental notions from graph theory are reviewed, including the results on the weighted graph and edge Laplacian matrices \cite{Zelazo2009b, Zelazo2011, Zelazo2014}.  Section \ref{sec:consensus} introduces the uncertain consensus models.  Section \ref{sec:negweights_consensus} provides a general analysis on the stability of linear consensus protocols with negative edge weights, and Section \ref{sec:small gain} leverages these results to make statements on the robust stability of uncertain networks.  Finally, concluding remarks are offered in Sections \ref{sec:conclusion}.

\paragraph*{Preliminaries}
The notations employed in this work are standard.  The $n$-dimensional Euclidean vector space is denoted $\reals^n$ and $\|x\|$ denotes the standard Euclidean norm for $x \in \reals^n$.  For a real $n \times m$ matrix $A$, $\overline{\sigma}(A)$ denotes its largest singular value and the spectral norm of the matrix $A$ is $\|A\| = \overline{\sigma}(A)$.  The image and null-space of a matrix $A$ are denoted $\mbox{\bf IM}[A]$ and $\mc{N}[A]$, respectively.The signature of a real symmetric matrix $A$, denoted as the triple $s(A) = (n_+,n_-,n_0)$, is the number of positive, negative, and zero eigenvalues of the matrix.  An important result on the the signature of a matrix is \emph{Sylvester's Law of Inertia}, which states that all congruent symmetric matrices have the same signature \cite{Horn1985}.\footnote{A square matrix $A$ is \emph{congruent} to a square matrix $B$ of the same dimension if there exists an invertible matrix $S$ such that $B=S^TAS$.}  The $m$-dimensional space of piecewise continuous vector-valued square integrable functions is denoted as $\mc{L}^m_2[0, \infty)$. 
 For a linear dynamical system described by the state-space model $\dot{x}(t)=Ax(t)+Bu(t), \, y(t)=Cx(t)+Du(t)$, its transfer function is denoted as $G(s)=C(sI-A)^{-1}B+D$.  The $\mc{L}_2$-induced gain of the system described by $G(s)$ is determined by the $\mc{H}_{\infty}$ system norm, $\|G(s)\|_{\infty} = \sup_\omega \overline{\sigma}(G(j\omega))$ \cite{Dullerud2000}.

\section{Forests, Cycles, and the Edge Laplacian}\label{sec:edgelap}

In this section we introduce the fundamental notions of spanning forests and cycles in graphs and present their associated matrix representations \cite{Godsil2001}. We consider undirected weighted graphs described by the triple $\mc{G} = (\mc{V},\mc{E},\mc{W})$ consisting of the node set $\mc{V}$, edge set $\mc{E} \subseteq \mc{V} \times \mc{V}$, and weight function that maps each edge to a scalar value, $\mc{W}: \mc{E} \rightarrow \reals $.
 We often collect the weights of all the edges in a diagonal matrix $W \in \reals^{|\mc{E} | \times |\mc{E} |}$ such that $[W]_{kk} = \mc{W}(k)=w_k$ where $e_k = (i,j) \in \mc{E}$.  
 
The maximal acyclic subgraph of $\mc{G}$ is referred to as a \emph{spanning forrest}, denoted $\mc{F} = (\mc{V},\mc{E}_{\ms \mc{F}}) $.  If $\mc{F}$ is connected, it is called a \emph{spanning tree}, and is denoted $\mc{T} = (\mc{V},\mc{E}_{\ms \mc{T}})$.
Every graph $\mc{G}$ can always be expressed as the union of a spanning tree (spanning forrest) and another subgraph containing the remaining edges, i.e., $\mc{G} = \mc{T} \cup \mc{C}$ ($\mc{G} = \mc{F} \cup \mc{C}$).  The subgraph $\mc{C}$ necessarily ``completes cycles" in $\mc{G}$, and is defined as $\mc{C} = (\mc{V},\mc{E}_{\ms \mc{C}}) \subset \mc{G}$ with $\mc{E}_{\ms \mc{C}} = \mc{E} \setminus \mc{E}_{\ms \mc{T}}$ (similarly defined with a forrest); we refer to this as the \emph{cycle subgraph}.  

The \emph{incidence matrix} of a graph (with arbitrary edge orientation), $E(\mc{G}) \in \reals^{|\mc{V}| \times |\mc{E}|}$, is such that for edge $e_k = (i,j) \in \mc{E}$, $[E(\mc{G})]_{i,k} = 1$, $[E(\mc{G})]_{j,k} = -1$, and $[E(\mc{G})]_{\ell,k} = 0$ for $\ell \neq i,j$.  With an appropriate labeling of the edges, we can always express the incidence matrix as { $E(\mc{G}) = \leftm{cc} E(\mc{T}) & E(\mc{C}) \rightm$} ({$E(\mc{G}) = \leftm{cc} E(\mc{F}) & E(\mc{C}) \rightm$}).  An important property of the incidence matrix is that $E(\mc{G})^T\ones = 0$ for any graph $\mc{G}$, where $\ones$ is the vector of all ones, and that $\mbox{\bf rk}[E(\mc{G})]  = n-c$ for a graph with $c$ connected components.  For a more compact notation, we will write $E := E(\mc{G}), \Etree = E(\mc{T}), \Eforrest = E(\mc{F})$, and $\Ecycle = E(\mc{C})$.  

The weighted graph Laplacian of $\mc{G}$ is defined as $L(\mc{G}) = EWE^{T} \in \reals^{|\mc{V}| \times |\mc{V}|}$.  It is well known that for a weighted graph with only positive edge weights (i.e., $\mc{W}:\mc{E} \rightarrow \reals_{\geq 0}$), the signature of the Laplacian can be expressed as $s(L(\mc{G})) = (|\mc{V}|-c,0,c)$, where $c$ is the number of connected components of $\mc{G}$~\cite{Godsil2001}.   Another symmetric matrix representation of a graph is the \emph{edge Laplacian} \cite{Zelazo2009b}.  For weighted graphs, we define the edge Laplacian as $L_e(\mc{G}) := W^{\frac{1}{2}}E^TEW^{\frac{1}{2}} \in \reals^{|\mc{E}| \times |\mc{E}|}$.

\begin{table*}[t]
\begin{center}
\begin{tabular}{| l | l || l | l |}
\hline
 Incidence Matrix & $E(\mc{G}) = \leftm{cc} E(\mc{F}) & E(\mc{C}) \rightm $ & Graph Laplacian & $ L(\mc{G})=EWE^T$ \\ \hline
 Tucker Representation & $ \Tcyclef = L_e(\mc{F})^{-1}\Eforrest^T\Ecycle$ & Edge Laplacian & $ L_e(\mc{G})=W^{\frac{1}{2}}E^TEW^{\frac{1}{2}} $\\ \hline
 Cut-set Matrix & $\Rf = \leftm{cc} I & \Tcyclef \rightm$ & Essential Edge Laplacian & $L_{ess}(\mc{F}) = L_e(\mc{F})\Rf W \Rf^T$ \\ \hline
\end{tabular}
\end{center}
\caption{Summary of matrix representations for graphs.}\label{table.matrixgraphs}
\vspace{-20pt}
\end{table*}

We now review some basic results relating the weighted edge Laplacian matrix to the graph Laplacian.  
\begin{proposition}\label{prop:edgelap_sim2}
The weighted Laplacian matrix $L(\mc{G})=EWE^T$ is similar to the matrix
\bea\label{essential_weighted_edgelap}
\leftm{cc} L_e(\mc{F}) \Rf W\Rf^T & {\bf 0} \\ {\bf 0} & {\bf 0}_c \rightm,
\eea
where $\mc{G}$ has $c$ connected components, $\mc{F} \subseteq \mc{G}$ is a spanning forrest 
and $\Rf = \leftm{cc} I & L_e(\mc{F})^{-1}\Eforrest^T\Ecycle \rightm=  \leftm{cc} I & \Tcyclef \rightm.$
\end{proposition}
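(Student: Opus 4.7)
The plan is to exhibit an explicit similarity transformation that block-diagonalizes $L(\mc{G})$ into the stated form. The starting point is the partition $E = [\Eforrest \ \Ecycle]$ together with the basic observation that each cycle edge joins two nodes of the same tree of the spanning forest, so $\mbox{\bf IM}[\Ecycle] \subseteq \mbox{\bf IM}[\Eforrest]$. Since $\Eforrest$ has full column rank $n - c$, the Gram matrix $L_e(\mc{F}) = \Eforrest^T\Eforrest$ is invertible, and setting $\Tcyclef := L_e(\mc{F})^{-1}\Eforrest^T\Ecycle$ reproduces $\Ecycle = \Eforrest \Tcyclef$ via the orthogonal projection onto $\mbox{\bf IM}[\Eforrest]$. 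Hence $E = \Eforrest \Rf$ with $\Rf = [I \ \Tcyclef]$, and consequently
\[
L(\mc{G}) \;=\; EWE^T \;=\; \Eforrest\, \Rf\, W\, \Rf^T\, \Eforrest^T .
\]

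Next I would build a basis of $\reals^{|\mc{V}|}$ adapted to the direct sum $\reals^{|\mc{V}|} = \mbox{\bf IM}[\Eforrest] \oplus \mc{N}[\Eforrest^T]$. Pick $U \in \reals^{|\mc{V}| \times c}$ whose columns form an orthonormal basis of $\mc{N}[\Eforrest^T]$ (these are, up to normalization, the indicator vectors of the $c$ connected components of $\mc{G}$). Define the candidate similarity
\[
M \;:=\; \begin{bmatrix} \Eforrest\, L_e(\mc{F})^{-1} & U \end{bmatrix},
\]
which is invertible because its two column blocks span complementary subspaces of total dimension $|\mc{V}|$. Using $\Eforrest^T U = 0$, $U^T U = I_c$, and $\Eforrest^T \Eforrest\, L_e(\mc{F})^{-1} = I_{n-c}$, a one-line check shows that
\[
M^{-1} \;=\; \begin{bmatrix} \Eforrest^T \\ U^T \end{bmatrix}.
\]

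Substituting into $M^{-1} L(\mc{G}) M$, the row $U^T$ annihilates $\Eforrest$ on the left and the column $U$ is annihilated by $\Eforrest^T$ on the right, which kills the off-diagonal blocks and the lower-right $c \times c$ block. The upper-left block simplifies as
\[
\Eforrest^T \Eforrest\, \Rf\, W\, \Rf^T\, \Eforrest^T \Eforrest\, L_e(\mc{F})^{-1} \;=\; L_e(\mc{F})\, \Rf\, W\, \Rf^T\, L_e(\mc{F})\, L_e(\mc{F})^{-1} \;=\; L_e(\mc{F})\, \Rf\, W\, \Rf^T,
\]
which is exactly \eqref{essential_weighted_edgelap}. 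The main obstacle is essentially cosmetic: one has to place the $L_e(\mc{F})^{-1}$ factor on the correct side of $\Eforrest$ inside $M$ so that the upper-left block comes out as $L_e(\mc{F})\Rf W \Rf^T$ rather than, say, $\Rf W \Rf^T L_e(\mc{F})$ or a symmetrically split variant. Beyond this bookkeeping, everything reduces to the single factorization $E = \Eforrest \Rf$ together with the rank-nullity splitting of $\reals^{|\mc{V}|}$ induced by the spanning forest, so no further ingredients are required.
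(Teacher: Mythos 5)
Your proof is correct and follows essentially the same route as the paper: a change of basis adapted to the splitting $\reals^{|\mc{V}|} = \mbox{\bf IM}[\Eforrest] \oplus \mc{N}[\Eforrest^T]$, applied to the factorization $E = \Eforrest \Rf$. The only difference is the bookkeeping you yourself flag: your $M$ places $L_e(\mc{F})^{-1}$ so that the upper-left block comes out exactly as $L_e(\mc{F})\Rf W \Rf^T$, whereas the paper's $V = \leftm{cc} \Eforrest & N_{\ms \mc{F}} \rightm$ yields the (similar, reordered) product $\Rf W \Rf^T L_e(\mc{F})$ in that block.
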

\begin{proof}
For a spanning forest, one has $\mbox{\bf rk}[E_{\ms \mc{F}}]  = n-c$ and therefore $L_e(\mc{F})=E_{\ms \mc{F}}^TE_{\ms \mc{F}}$ is invertible.  Define the transformation matrices 
\beas
V = \leftm{cc} \Eforrest& N_{\ms {\mc{F}}} \rightm &,&
V^{-1} = \leftm{c} L_e(\mc{F})^{-1}\Eforrest^T \\ N_{\ms {\mc{F}}}^T \rightm,
\eeas
where $\mbox{\bf IM} [N_{\ms \mc{F}} ]= \mbox{\bf span}[\mc{N}[\Eforrest^T]]$.
It is straightforward to verify that the matrix in (\ref{essential_weighted_edgelap}) equals $V^{-1}L(\mc{G})V$.
\hfill \QEDclosed
\end{proof}

The matrix $L_e(\mc{F}) \Rf W\Rf^T:=L_{\ms ess}(\mc{F})$ is referred to as the \emph{essential edge Laplacian} \cite{Zelazo2011} (for a connected graph with spanning tree $\mc{T}$, we write $L_{\ms ess}(\mc{T})$).  Note that the essential edge Laplacian is therefore a non-singular matrix, and the edge Laplacian of a spanning forest ($L_e(\mc{F})$) is always positive-definite.
The matrix $L_e(\mc{F})^{-1}\Eforrest^T\Ecycle:=\Tcyclef$ is sometimes referred to as the \emph{Tucker representation} of a graph \cite{Rockafellar1997}.  The rows of the matrix $\Rf$ form a basis for the \emph{cut space} of a graph \cite{Godsil2001} and we refer to it as the \emph{cut-set matrix}.
For a more in depth discussion on the matrices $\Rf$ and $\Tcyclef$, please see \cite{Zelazo2009b, Zelazo2011}.  Note also that the matrix $L_e(\mc{F})^{-1}\Eforrest^T$ is the \emph{left-inverse} of $\Eforrest$; we denote this matrix as $\Eforrest^L$.  For the readers convenience, we summarize the various matrix representations of graphs in Table \ref{table.matrixgraphs}.


\begin{theorem}[\cite{Zelazo2014}]\label{thm:signature1}
Assume $\mc{G}$ has $c$ connected components and $s(L(\mc{G}))=(n_+,n_-,n_0)$.  Then $s(L_{\ms ess}(\mc{F})) = (n_+,n_-,n_0-c)$. Furthermore, $L_{\ms ess}(\mc{F})$ has the same signature as the matrix $\Rf W \Rf^T$. 
\end{theorem}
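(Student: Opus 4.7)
The plan is to bootstrap both assertions off Proposition~\ref{prop:edgelap_sim2}: the first drops out by counting eigenvalues on the two sides of the similarity, and the second is a variant of the standard ``$AB$ and $B$ have the same inertia when $A$ is symmetric positive definite'' fact, realized through Sylvester's law of inertia.

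For the first claim I would invoke Proposition~\ref{prop:edgelap_sim2} verbatim: it provides an invertible $V$ with $V^{-1} L(\mc{G}) V = \mathrm{diag}(L_{\ms ess}(\mc{F}),\, \mathbf{0}_c)$. Because similar matrices have identical spectra with algebraic multiplicities, the spectrum of $L(\mc{G})$ equals that of $L_{\ms ess}(\mc{F})$ together with $c$ additional zeros from the $\mathbf{0}_c$ block. Peeling those zeros off $(n_+, n_-, n_0)$ immediately yields $s(L_{\ms ess}(\mc{F})) = (n_+, n_-, n_0-c)$.

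For the second claim, I would use that $L_e(\mc{F})$ is symmetric positive definite (as noted in the paper right after Proposition~\ref{prop:edgelap_sim2}) and factor $L_e(\mc{F}) = P^T P$ with $P$ invertible, e.g.\ via Cholesky. Then
\[
(P^T)^{-1}\, L_{\ms ess}(\mc{F})\, P^T \;=\; (P^T)^{-1}(P^T P)(\Rf W \Rf^T)\, P^T \;=\; P\,(\Rf W \Rf^T)\, P^T,
\]
so $L_{\ms ess}(\mc{F})$ is similar to the symmetric matrix $P\,(\Rf W \Rf^T)\, P^T$. The latter is a congruence transformation of $\Rf W \Rf^T$ via the invertible matrix $P^T$, so by Sylvester's law of inertia the two symmetric matrices share a signature. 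Since similarity preserves spectra, $L_{\ms ess}(\mc{F})$ (which we now know has only real eigenvalues, making its signature well defined) inherits the same signature as $\Rf W \Rf^T$.

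I do not anticipate any real obstacle: every step is either a named classical tool (similarity preserves spectrum, Sylvester's law of inertia, Cholesky factorization) or an immediate consequence of Proposition~\ref{prop:edgelap_sim2}. The one subtlety worth flagging is that $L_{\ms ess}(\mc{F})$ is in general nonsymmetric, so before speaking of its signature one must exhibit its similarity to a symmetric matrix, which is exactly what the $P$-based conjugation above provides.
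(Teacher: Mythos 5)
Your proposal is correct and follows essentially the same route as the paper: the paper conjugates $L_{\ms ess}(\mc{F})$ by the symmetric square root $L_e(\mc{F})^{\frac{1}{2}}$ to obtain a symmetric matrix congruent to $\Rf W \Rf^T$ and then applies Sylvester's law, which is exactly your argument with a Cholesky factor in place of the square root. Your explicit treatment of the first claim via Proposition~\ref{prop:edgelap_sim2} and your remark that the nonsymmetric $L_{\ms ess}(\mc{F})$ must first be shown similar to a symmetric matrix are both sound points that the paper leaves implicit.
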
 

\begin{proof}
Using the similarity transformation matrix $L_e(\mc{F})^{\frac{1}{2}}$ we have that $\Edgelapf$ is similar to $L_e(\mc{F})^{\frac{1}{2}}\Rf W \Rf^TL_e(\mc{F})^{\frac{1}{2}}$.  This matrix is congruent to $\Rf W \Rf^T$ and thus by Sylvester's Law of Inertia has the same inertia as $\Edgelapf$.   \hfill \QEDclosed
\end{proof}

An immediate corollary of Theorem \ref{thm:signature1} is that the if $s(L(\mc{G}))=(n_+,n_-,n_0)$ and $\mc{G}$ has $c$ connected components, then $s(\Rf W \Rf^T) = (n_+,n_-,n_0-c)$.

The matrix $\Rf W\Rf^T$ turns out to be closely related to many combinatorial properties of a graph.  For example, the rows of the matrix $\Rf$ form a basis for the cut-space of the graph \cite{Godsil2001}.  This matrix is also intimately related to the notion of effective resistance of a graph, which will be discussed in Section \ref{sec:negweights_consensus}.  Theorem \ref{thm:signature1} thus shows that studying the definiteness of the weighted Laplacian can be reduced to studying the matrix $\Rf W \Rf^T$ which contains in a more explicit way information on how both the location and magnitude of negative weight edges influence it spectral properties.  This theme will be periodically revisited throughout this work.

\section{Consensus and the Edge Agreement }\label{sec:consensus}
In this work we consider the linear weighted consensus protocol \cite{Mesbahi2010} in the presence of exogenous finite-energy disturbances.  The consensus dynamics over the graph $\mc{G}=(\mc{V},\mc{E},\mc{W})$ can thus be expressed as
\bea\label{consensus}
\Sigma(\mc{G}) \,:\, \left\{ 
	\begin{array}{ccc}  
		\dot{x}(t) &=& -L(\mc{G})x(t) + \w(t) \\
		z(t) &=& E(\mc{G}_o)^Tx(t)	
	\end{array} 
	\right. .
\eea
Here, $\w(t) = \leftm{ccc} \w_1(t) & \cdots & \w_n(t) \rightm^T \in \mc{L}_2^n[0,\infty)$ is a finite-energy exogenous disturbance entering each agent in the system.  The performance of the system is measured in terms of the energy of the vector $z(t) \in \reals^{|\mc{E}_{o}|}$ capturing the relative states over a set of edges determined by the unweighted graph $\mc{G}_o=(\mc{V}, \mc{E}_{o})$.\footnote{Note that $\mc{G}_o$ need not have any dependence on $\mc{G}$.}  

With this model we can consider how finite energy disturbances affect the asymptotic deviation a subset of the states to an agreement value.  
This can formally be analyzed by considering the $\mc{L}_2$-induced gain of the system, i.e., its $\mc{H}_{\infty}$ performance, $\|\Sigma(\mc{G})\|_{\infty}$ \cite{Zelazo2009b}.   It is a well-established result that if the graph $\mc{G}$ is connected and all the weights are positive, then (\ref{consensus}) 
reaches consensus, i.e., $\lim_{t \rightarrow \infty} z(t) = 0$, for any output graph $\mc{G}_o$.  Therefore, $z(t) \in \mc{L}_2^{|\mc{E}_{o}|}[0,\infty)$ and $\|\Sigma(\mc{G})\|_{\infty}$ is finite.  However, this is not the case if $L(\mc{G})$ has multiple eigenvalues at the origin (and certainly if any eigenvalue of $L(\mc{G})$ is positive).  Indeed, if $\mc{G}$ is disconnected and $E(\mc{G}_o) = \overline{\mc{G}}$,\footnote{The notation $\overline{\mc{G}}$ denotes the \emph{complement} of the graph $\mc{G}$.} then $z(t)$ is not a finite-energy signal.  On the other hand, for the same example and choosing $\mc{G}_o \subseteq \mc{G}$ it can be verified that $z(t) \in \mc{L}_2^{|\mc{E}_o|}[0,\infty)$ (since each component of the graph will reach consensus).  

Nevertheless, we would like to examine some notion of performance of (\ref{consensus}) for any choice of output graph $\mc{G}_o$.  In this direction, we can define a coordinate transformation for (\ref{consensus}), $\tilde{x}(t) = S^{-1}x(t)$, with 
$$S = \leftm{cc} (\Eforrest^L)^T & N_{\ms \mc{F}}\rightm, \; S^{-1} = \leftm{c}  \Eforrest^T \\ N_{\ms \mc{F}}^T \rightm,$$
where ${\bf IM}[N_{\ms \mc{F}}] = \mbox{\bf span}\{\mc{N}[\Eforrest^T]\}$.  Thus, the state vector $\tilde{x}(t)$ can be partitioned into two components as $\tilde{x}(t) = \leftm{cc} x_{\ms \mc{F}}(t)^T & x_{a}^T(t) \rightm^T$, where $x_{\ms \mc{F}}(t) = \Eforrest^T x(t)$ are the relative states over the edges forming the spanning forrest of $\mc{G}$, and $x_a(t) = N_{\ms \mc{F}}^Tx(t)$ correspond to modes in the direction of the all-ones vector across each component of $\mc{G}$.  Applying this transformation to (\ref{consensus}) leads to the following system,
{
\bea\label{consensus_transform}
\leftm{c} \dot{x}_{\ms \mc{F}}(t) \\ \dot{x}_a(t) \rightm &=&\leftm{cc} -L_{\ms ess}(\mc{F})& {\bf 0} \\ {\bf 0} & {\bf 0} \rightm \leftm{c} x_{\ms \mc{F}}(t) \\ x_a(t) \rightm +  S^{-1} \w(t) \nonumber \\
z(t) &=&E(\mc{G}_o)^T\leftm{cc} (\Eforrest^L)^T&N_{\ms \mc{F}}\rightm  \leftm{c} x_{\ms \mc{F}}(t) \\ x_a(t) \rightm	.
\eea
}
In the new coordinate system, it is now straightforward to show that $\lim_{t \rightarrow \infty} z(t) = E(\mc{G}_o)^TN_{\ms \mc{F}} x_{a}(0)$ and furthermore that $E(\mc{G}_o)^T(\Eforrest^L)^T x_{\ms \mc{F}}(t) \in \mc{L}_2^{|\mc{E}_{{\ms \mc{F}}}|}[0,\infty)$.  We can now consider the following truncated system,
\bea\label{consensus_F}
\Sigma_{\ms \mc{F}}(\mc{G}) : \left\{ 
	\begin{array}{ccc}  
		\dot{x}_{\ms \mc{F}}(t) &=&-L_{\ms ess}(\mc{F})x_{\ms \mc{F}}(t) + \Eforrest^T\w(t) \\
		z_{\ms \mc{F}}(t) &=& E(\mc{G}_o)^T(\Eforrest^L)^Tx_{\ms \mc{F}}(t)	
	\end{array} 
	\right. .
\eea
We term the system $\Sigma_{\ms \mc{F}}(\mc{G})$ the \emph{edge agreement protocol} over the spanning forest $\mc{F} \subseteq \mc{G}$.  The above transformation holds also when $\mc{G}$ is connected and $\mc{F} = \mc{T}$ is a spanning tree.  
It is verifiable that the system $\Sigma_{\ms \mc{F}}(\mc{G})$ is \emph{minimal}. 
 When $\mc{F} = \mc{T}$ is a spanning tree, then $\Sigma_{\ms \mc{F}}(\mc{G})$ is a minimal realization of $\Sigma(\mc{G})$ \cite{Zelazo2011}.

\subsection{The Uncertain Edge Agreement Protocol}\label{subsec:uncertain_edge}
\begin{figure}[!t]
\begin{center}
\includegraphics[scale=0.6]{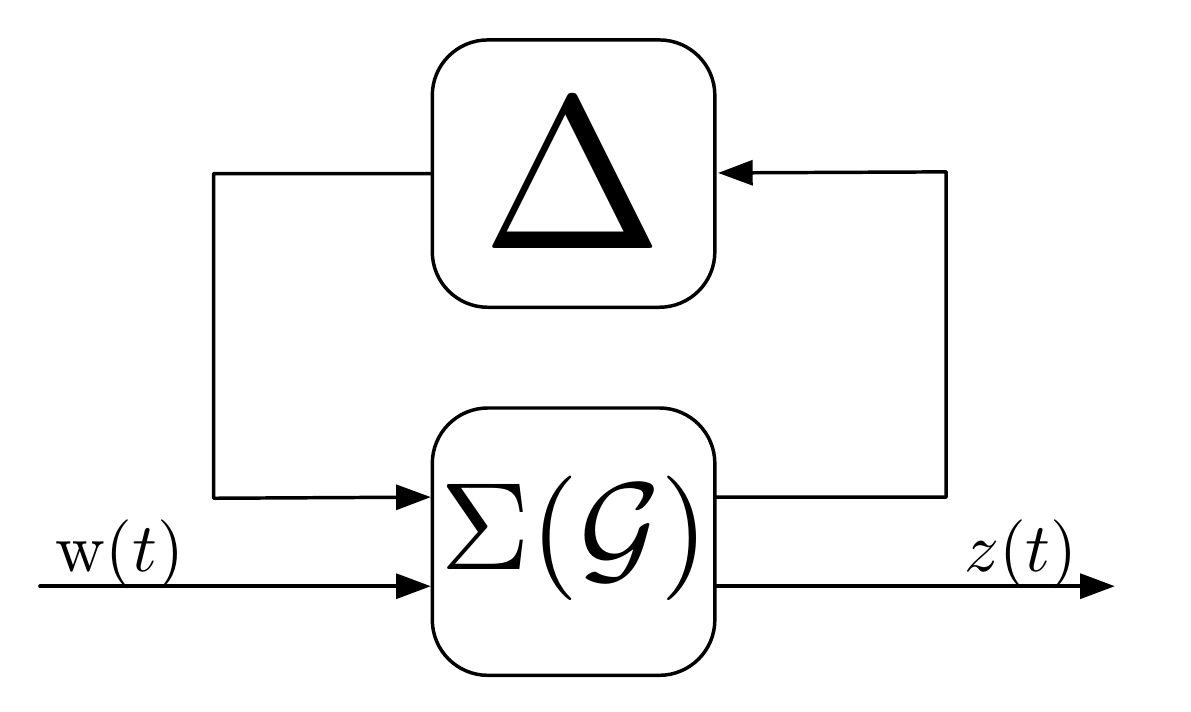}
\caption{The consensus network with uncertainties.}\label{fig:uncertain}
\end{center}
\end{figure}

We now introduce a notion of uncertainty into the edge agreement protocols.  
First we examine an uncertainty model where it is assumed that the exact weights of a subset of edges are an uncertain but bounded perturbation about some nominal value.  In this direction, let $\mc{E}_{\ms \Delta} \subseteq \mc{E}$ denote the set of uncertain edges.  The nominal edge weight for an edge $k \in \mc{E}_{\Delta}$ is determined by the weight function $\mc{W}$; i.e., the nominal weight of edge $k$ is $\mc{W}(k) = w_k$.  The uncertainty of the weight on edge $k$ is modeled as an additive perturbation to the nominal edge weight as $w_k + \delta_k$ with $|\delta_k| \leq \overline{\delta}$ for some finite positive scalar $\overline{\delta}$.  Thus, we can define the uncertainty set as 
\beas
{\bf \Delta} &\hspace{-5pt}=&\hspace{-5pt} \{\Delta \, : \, \Delta = \mbox{\bf diag}\{\delta_1,\ldots,\delta_{|\mc{E}_{\ms \Delta}|}\}, \,  \|\Delta\| \leq \overline \delta \}.
\eeas

In this way, we can consider the uncertain edge agreement protocol as 
{
\bea\label{uncertain_edge_agreement}
\Sigma_{\ms \mc{F}}(\mc{G},\Delta): &&\left\{\begin{array}{ccc} \dot{x}_{\ms \mc{F}}(t) &=& -L_e(\mc{F})R_{\ms (\mc{F},\mc{C})}\left(W+P\Delta P^T\right)R_{\ms (\mc{F},\mc{C})}^T x_{\ms \mc{F}}(t) + \Eforrest^T {\mathrm v}(t)\\
z(t) &=&E(\mc{G}_{o})^T (\Eforrest^L)^T x_{\ms \mc{F}}(t) \end{array} \right. . \nonumber 
\eea
}
for $\Delta \in {\bf \Delta}$.
In this form we see that the uncertainty is a structured additive uncertainty.  
The matrix $P \in \reals^{|\mc{E}| \times |\mc{E}_{\ms \Delta}|}$ is a $\{0,1\}$-matrix used to select the uncertain edges with $[P]_{ij} = 1$ if $e_i \in \mc{E} \cap \mc{E}_{\ms \Delta}$, and $[P]_{ij} = 0$ otherwise (i.e., $E(\mc{G})P = E(\mc{G}_{\ms \Delta})$ with $\mc{G}_{\ms \Delta}=(\mc{V},\mc{E}_{\ms \Delta})$).  This setup is visualized by the two-port block diagram in Figure \ref{fig:uncertain}.

We also consider the consensus protocol with non-linear couplings.  By introducing appropriate assumptions on the non-linear couplings we are able to cast the problem as an uncertain agreement protocol in the form of Figure \ref{fig:uncertain}.  In this direction, the non-linear consensus protocol has the form
\beas
\dot{x}(t) &=& -L(\mc{G})x(t) - E(\mc{G}_{\ms \Delta}) \Phi \left(E(\mc{G}_{\ms \Delta})^T x(t)\right) + {\mathrm v}(t) \\
z(t) &=& E(\mc{G}_o)^Tx(t).
\eeas
As before, we assume $\mc{E}_{\ms \Delta}\subseteq \mc{E}$.
 The non-linear vector function $\Phi : \reals^{| \mc{E}_{\ms \Delta}|} \rightarrow \reals^{|\mc{E}_{\ms \Delta}|}$ is assumed to be decoupled, that is $\Phi(y) = \leftm{ccc} \phi_1(y_1) & \cdots & \phi_{| \mc{E}_{\ms \Delta}|}(y_{| \mc{E}_{\ms \Delta}|} )\rightm^T$.  Furthermore, we assume that the nonlinear functions $\phi_i(\cdot)$ belong to the sector $[\alpha_i, \, \beta_i]$; that is $\alpha_i u_i^2 \leq u_i \phi_i(y_i) \leq \beta_i u_i^2$ for all $u_i \in \reals$ and $\alpha_i < \beta_i$ both real numbers. 

The corresponding non-linear edge agreement protocol can thus be expressed as 
{
\bea\label{nonlinear_edge_agreement}
\Sigma_{\ms \mc{F}}(\mc{G},\Phi): &&\left\{\begin{array}{ccc} \dot{x}_{\ms \mc{F}}(t) &=&- L_{\ms ess}(\mc{F})x_{\ms \mc{F}}(t) - L_e(\mc{F})R_{\ms (\mc{F},\mc{C})}P\left(\Phi(P^TR_{\ms (\mc{F},\mc{C})}^T x_{\ms \mc{F}}(t)) \right) + \Eforrest^T {\mathrm v}(t)  \\
z(t) &=&E(\mc{G}_{o})^T (\Eforrest^L)^T x_{\ms \mc{F}}(t) \end{array} \right. . \nonumber 
\eea
}

For both models $\Sigma_{\ms \mc{F}}(\mc{G},\Delta)$ and $\Sigma_{\ms \mc{F}}(\mc{G},\Phi)$ we will be concerned with determining bounds on the uncertainty that guarantee the \emph{robust stability} of the uncertain agreement protocols.  The main analytic tool will be the application of the small gain theorem.  To proceed with this analysis we first examine stability properties of the linear consensus protocol with arbitrary negative edge weights.  These results will then be applied to derive the more general robust stability statements of the uncertain models.

\section{On the Stability of Weighted Consensus}\label{sec:negweights_consensus}

Before examining the robust stability of the uncertain consensus networks presented in Section \ref{subsec:uncertain_edge}, we first study the general stability properties of the linear consensus protocol $\Sigma(\mc{G})$ with negative weights.  This section aims to reveal how both the magnitude and placement of negative weight edges in a weighted graph can influence the stability properties of $\Sigma(\mc{G})$.  In this section we assume there are no exogenous disturbances entering the protocol (i.e., $\w(t) = 0$).

The results of Section \ref{sec:edgelap} revealed that the signature of the weighted Laplacian is related to the signature of an associated matrix, $\Rf W \Rf^T$.  We now exploit the structure of this matrix to show how negative edge weights affect the stability of the weighted consensus.  We first examine the effect of negative edge weights for a class of graphs known as \emph{signed graphs}.  Then we proceed to provide a general stability characterization that turns out to be related to the notion of the \emph{effective resistance} of a graph.  Moreover, as $\Sigma(\mc{G})$ is a linear system, its stability can be determined by examining the eigenvalues of the weighted Laplacian $L(\mc{G})$.  In particular, the stability of $\Sigma(\mc{G})$ is guaranteed as long as the weighted Laplacian is a positive semi-definite matrix.  Therefore, the results to follow are presented in terms of the definiteness of $L(\mc{G})$.

\subsection{Signed Graphs and the Stability of $\Sigma(\mc{G})$}\label{subset:signedgraphs}

A useful approach for the analysis of graphs with both positive and negative edge weights is the notion of \emph{signed graphs} \cite{Zaslavsky1982}.  A signed graph is a weighted graph $\mc{G}$ with a partition of the edge set into edges with positive weights, denoted $\mc{E}_+$, and edges with negative weights, denoted $\mc{E}_-$. 
We can now define two subgraphs, one containing only positive weight edges, and one containing only negative weight edges; $\mc{G}_{+} = (\mc{V}, \mc{E}_{+}, \mc{W}_+)$ and $\mc{G}_{-} = (\mc{V}, \mc{E}_{-}, \mc{W}_{-})$.  The weight maps $\mc{W}_{+}$ and $\mc{W}_{-}$ are simply the original weight map $\mc{W}$ restricted to either $\mc{E}_+$ or $\mc{E}_{-}$ with corresponding diagonal weight matrices $W_+$ and $W_-$.  The corresponding incidence matrices can also be written as \cite{Zelazo2009b}
\beas
E_+ &\hspace{-7pt}:=&\hspace{-7pt} E(\mc{G}_+) = \Eforrestp\Rplus = \Eforrestp \leftm{cc} I & \Tplus \rightm \\
E_- &\hspace{-7pt}:=&\hspace{-7pt} E(\mc{G}_-) = \Eforrestm\Rminus = \Eforrestm \leftm{cc} I & \Tminus \rightm .
\eeas
A signed graph is \emph{balanced} if and only if the vertex set can be partitioned into 2 sets $\mc{V}_1$ and $\mc{V}_2$, such that all edges in $\mc{E}_-$ connect nodes in $\mc{V}_1$ to nodes in $\mc{V}_2$ (i.e., $\mc{G}_{-}$ is bipartite), and all edges in $\mc{E}_+$ connects nodes in $\mc{V}_i$ only to nodes in $\mc{V}_i$ for $i=1,2$ \cite{Harary1953}.%

Observe that the weighted graph Laplacian can now be expressed in terms of the positive and negative weight subgraphs,
\bea\label{signed_laplacian}
L(\mc{G}) = E_+W_+E_+^T-E_-|W_-|E_-^T.
\eea
This decomposition leads to the following statement on the definiteness of the weighted Laplacian.
\begin{theorem}\label{thm:psd2}
The weighted Laplacian is positive semi-definite if and only if 
\bea\label{lmi1}
\leftm{cc} |W_{-}|^{-1} & E_{-}^T  \\ E_{-} & E_+W_+E_+^T \rightm & \geq & 0.
\eea
\end{theorem}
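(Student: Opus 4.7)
The plan is to recognize this as a direct Schur complement argument applied to the decomposition of $L(\mc{G})$ given in \eqref{signed_laplacian}. The block matrix in \eqref{lmi1} has a $(1,1)$ block $|W_-|^{-1}$ which is a diagonal matrix with strictly positive entries (since every edge in $\mc{E}_-$ carries a strictly negative weight, so $|w_k| > 0$). In particular, $|W_-|^{-1}$ is positive definite and invertible, which is exactly what is needed to invoke the standard Schur complement criterion for positive semi-definiteness of a $2\times 2$ symmetric block matrix.

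First I would record the Schur complement lemma in the form I need: for a symmetric block matrix
\[
M = \begin{bmatrix} A & B^T \\ B & C \end{bmatrix}
\]
with $A \succ 0$, one has $M \succeq 0$ if and only if the Schur complement $C - B A^{-1} B^T \succeq 0$. Applying this with $A = |W_-|^{-1}$, $B = E_-$, and $C = E_+ W_+ E_+^T$, a one-line computation gives
\[
C - B A^{-1} B^T \;=\; E_+ W_+ E_+^T - E_- |W_-| E_-^T,
\]
which by the decomposition \eqref{signed_laplacian} is precisely $L(\mc{G})$. The equivalence stated in the theorem then follows immediately.

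There is really no substantive obstacle here; the only point worth double-checking is the sign bookkeeping in \eqref{signed_laplacian}, namely that $-E_- W_- E_-^T = -E_-(-|W_-|) E_-^T = E_- |W_-| E_-^T$ is subtracted rather than added when the Schur complement is computed, so the signs line up correctly. For completeness I would note that the invertibility of $|W_-|$ is not a restrictive assumption: any edge $k \in \mc{E}_-$ with $w_k = 0$ can simply be removed from the negative-edge subgraph without changing $L(\mc{G})$, so we may assume all diagonal entries of $|W_-|$ are strictly positive without loss of generality.
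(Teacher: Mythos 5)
Your proposal is correct and follows exactly the paper's own argument, which simply invokes the Schur complement applied to the decomposition $L(\mc{G}) = E_+W_+E_+^T - E_-|W_-|E_-^T$ in (\ref{signed_laplacian}); you have merely spelled out the details (positive definiteness of the $(1,1)$ block and the sign bookkeeping) that the paper leaves implicit.
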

\begin{proof}
This result follows from the Schur complement applied to (\ref{signed_laplacian}). \hfill \QEDclosed
\end{proof}

The linear matrix inequality (LMI) in (\ref{lmi1}) already indicates that the magnitude of the negative edge weights plays an important role in the definiteness of the Laplacian.  We now proceed to examine this LMI in more detail to reveal how also their location in the graph is important.
\begin{corollary}\label{cor:psd3}
The weighted Laplacian is positive semi-definite if and only if 
{
\bea \label{lmi_big}
\hspace{-12pt} \leftm{ccc}\hspace{-3pt} |W_{-}|^{-1} &  E_{-}^T(\Eforrestp^L)^T & E_{-}^TN_{\ms \mc{F}_+}  \hspace{-3pt}\\\hspace{-3pt}   \Eforrestp^L E_{-} &  \Rplus W_+ \Rplus^T  & 0 \hspace{-3pt}\\\hspace{-3pt} N_{\ms \mc{F}_+} ^TE_{-} & 0 & {\bf 0}\rightm &\hspace{-7pt}  \geq & \hspace{-7pt} 0.
\eea
}
\end{corollary}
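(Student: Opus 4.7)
The plan is to derive (\ref{lmi_big}) from (\ref{lmi1}) by applying a suitable congruence transformation to the block matrix of Theorem \ref{thm:psd2}, so that only the $(2,2)$ block is modified while preserving positive semi-definiteness by Sylvester's Law of Inertia.

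First I would define the block-diagonal transformation
\[
T = \leftm{cc} I & 0 \\ 0 & S \rightm, \qquad S = \leftm{cc} (\Eforrestp^L)^T & N_{\ms \mc{F}_+} \rightm,
\]
and argue that $S$ is invertible. This follows because $S = \leftm{cc} \Eforrestp & N_{\ms \mc{F}_+} \rightm \cdot \mbox{\bf diag}\{L_e(\mc{F}_+)^{-1}, I\}$: the first factor is exactly the invertible matrix $V$ used in the proof of Proposition \ref{prop:edgelap_sim2}, and the second factor is invertible since $L_e(\mc{F}_+)$ is positive definite on a spanning forrest. Hence $T$ is invertible and by Sylvester's Law of Inertia, (\ref{lmi1}) is equivalent to $T^T M T \geq 0$, where $M$ denotes the matrix in (\ref{lmi1}).

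Next I would carry out the block computation of $T^T M T$. The $(1,1)$ block remains $|W_-|^{-1}$, and the off-diagonal blocks become $E_-^T S = \leftm{cc} E_-^T(\Eforrestp^L)^T & E_-^T N_{\ms \mc{F}_+}\rightm$ and its transpose, matching the first row and column of (\ref{lmi_big}). The main computation is the $(2,2)$ block $S^T E_+ W_+ E_+^T S$. Using the factorization $E_+ = \Eforrestp \Rplus$ from the excerpt, one finds
\[
S^T E_+ = \leftm{c} \Eforrestp^L \Eforrestp \Rplus \\ N_{\ms \mc{F}_+}^T \Eforrestp \Rplus \rightm = \leftm{c} \Rplus \\ 0 \rightm,
\]
since $\Eforrestp^L$ is the left inverse of $\Eforrestp$ and $N_{\ms \mc{F}_+}^T \Eforrestp = 0$ by construction of $N_{\ms \mc{F}_+}$. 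Substituting yields
\[
S^T E_+ W_+ E_+^T S = \leftm{cc} \Rplus W_+ \Rplus^T & 0 \\ 0 & {\bf 0} \rightm,
\]
which is precisely the lower-right $2\times 2$ block of (\ref{lmi_big}).

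The main obstacle, if any, is a clean identification of the correct congruence $S$ rather than any deep argument; once $S$ is chosen, the verification reduces to applying $\Eforrestp^L \Eforrestp = I$ and $N_{\ms \mc{F}_+}^T \Eforrestp = 0$. Putting the pieces together, (\ref{lmi1}) $\Leftrightarrow$ $T^T M T \geq 0$ $\Leftrightarrow$ (\ref{lmi_big}), and combining with Theorem \ref{thm:psd2} gives the claim.
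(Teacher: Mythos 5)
Your proposal is correct and follows essentially the same route as the paper: a block-diagonal congruence with $S=\leftm{cc}(\Eforrestp^L)^T & N_{\ms \mc{F}_+}\rightm$ applied to the LMI of Theorem \ref{thm:psd2}, with positive semi-definiteness preserved by Sylvester's Law of Inertia. The only difference is that you spell out the invertibility of $S$ and the block computation (via $E_+=\Eforrestp\Rplus$, $\Eforrestp^L\Eforrestp=I$, $N_{\ms \mc{F}_+}^T\Eforrestp=0$) that the paper leaves implicit.
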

\begin{proof}
Let $\mbox{\bf IM} [N_{\ms \mc{F_+}} ]= \mbox{\bf span}\{\mc{N}[\Eforrestp^T]\}$.
Consider the congruent transformation matrix 
{
\beas
Q &=& \leftm{cc} I & 0 \\ 0 & \leftm{cc} (\Eforrestp^L)^T &N_{\ms \mc{F}_+} \rightm \rightm. 
\eeas
}
Then the matrix in (\ref{lmi1}) has the same signature (by the congruent transformation using $S$) as the matrix in (\ref{lmi_big}).  From Theorem \ref{thm:psd2} it now follows that the weighted Laplacian is positive semi-definite if and only if (\ref{lmi_big}) is positive semi-definite.\hfill \QEDclosed
\end{proof}

We can make some more observations about the above result.  It directly follows that if $\mc{G}_+$ is connected, then $L(\mc{G}) \geq 0$ if and only if 
{
\bea\label{cor:psd4_lmi}
 \leftm{cc} |W_{-}|^{-1} &  E_{-}^T(\Eforrestp^L)^T  \\   \Eforrestp^L E_{-} &  \Rplus W_+ \Rplus^T   \rightm & \geq & 0.
\eea
}

We are now prepared to make a statement connecting the location of the negative edge weights to the definiteness of the weighted Laplacian.  First, we comment on the structural meaning of the matrix $E_{-}^TN_{\ms \mc{F}_+} $ in (\ref{lmi_big}). The quantity $E_{-}^TN_{\ms \mc{F}_+}$ characterizes any cuts of the original graph $\mc{G}$ using only the negative weighted edges.  To show this, assume $\mc{G}_+$ has $c$ connected components and let $N_{\ms \mc{F}_+} = \leftm{ccc} {\bf n}_1 & \cdots & {\bf n}_c\rightm$.  Then ${\bf n}_i$ is a $\{0,1\}$-vector with $[{\bf n}_i]_j = 1$ if and only if node $j \in \mc{V}$ is in component $i$.  Then it is clear that $E_{-}^T{\bf n}_i$ will be a $\{0,\pm 1\}$-vector with $[E_{-}^T{\bf n}_i]_k = \pm 1$ if and only if $k = (u,v) \in \mc{E}_{-}$ such that $u$ and $v$ are not in the same components of $\mc{G}_+$.

\begin{theorem}\label{thm:negativecuts}
Assume that $\mc{G}$ is connected and $\mc{E}_+,\mc{E}_{-} \neq \emptyset$.  If $\mc{G}_+$ is not connected, then the weighted Laplacian matrix is indefinite for any (non-zero) choice of negative edge weights.
\end{theorem}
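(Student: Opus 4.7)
My plan is to read off the obstruction to $L(\mc{G})\geq 0$ directly from the block LMI (\ref{lmi_big}) supplied by Corollary \ref{cor:psd3}. The decisive feature of that LMI is that its bottom-right block is \emph{identically} the $c\times c$ zero matrix, where $c\geq 1$ is the number of connected components of $\mc{G}_+$. When $\mc{G}_+$ is disconnected we have $c\geq 2$, so this zero block is genuinely present and has non-trivial off-diagonal neighbours to play against.

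The first step is to invoke the elementary fact that a real symmetric PSD matrix cannot carry any non-zero entry in a row or column that passes through a zero diagonal entry; in block form, $\leftm{cc} M & N \\ N^T & 0 \rightm \geq 0$ forces $N=0$. Applied to (\ref{lmi_big}), a necessary condition for $L(\mc{G})\geq 0$ is therefore $E_{-}^T N_{\ms \mc{F}_+}=0$. The combinatorial meaning of $E_{-}^T N_{\ms \mc{F}_+}$ is already worked out in the paragraph immediately preceding the theorem statement: an entry equals $\pm 1$ precisely when the corresponding negative edge has its two endpoints in \emph{distinct} connected components of $\mc{G}_+$, and is $0$ otherwise. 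Hence the necessary condition translates exactly to the statement that no edge of $\mc{E}_{-}$ crosses between components of $\mc{G}_+$.

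The finishing step is a one-line graph-theoretic observation. Because $\mc{G}$ is connected while $\mc{G}_+$ is not, the partition of $\mc{V}$ into the connected components of $\mc{G}_+$ must be bridged by some edge of $\mc{G}$, and any such bridging edge necessarily lies in $\mc{E}_-$. This produces a non-zero entry in $E_{-}^T N_{\ms \mc{F}_+}$, the necessary condition fails, and so $L(\mc{G})$ is not positive semi-definite, i.e.\ carries at least one strictly negative eigenvalue. Crucially, the magnitudes of the negative weights never enter the argument beyond the requirement that $|W_{-}|^{-1}$ in (\ref{lmi1}) be well defined; that is precisely what delivers the uniformity asserted in the theorem. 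I do not anticipate any serious obstacle, since the only non-trivial ingredient is the block-PSD reduction of step one, and that is a truly elementary fact; everything else amounts to reading off the structure of (\ref{lmi_big}) and invoking the interpretation of $N_{\ms \mc{F}_+}$.
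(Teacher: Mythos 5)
Your proof is correct and follows essentially the same route as the paper: both reduce to the LMI of Corollary \ref{cor:psd3} and play the zero bottom-right block against the off-diagonal block $E_{-}^T N_{\ms \mc{F}_+}$ — the standard fact you invoke (a PSD matrix must vanish along any row through a zero diagonal entry) is exactly what the paper establishes in situ by writing out the quadratic form of that principal submatrix and choosing the last $c$ coordinates adversarially. The only difference is cosmetic, and slightly in your favor: you state explicitly the step the paper leaves implicit, namely that connectivity of $\mc{G}$ together with disconnectedness of $\mc{G}_+$ forces some edge of $\mc{E}_-$ to cross between components, so that $E_{-}^T N_{\ms \mc{F}_+}$ is genuinely non-zero.
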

\begin{proof}
Consider the following sub-matrix of (\ref{lmi_big}) obtained by deleting the center block row and column,
{
$$\leftm{cc}  |W_{-}|^{-1} &E_{-}^TN_{\ms \mc{F}_+}  \\ N_{\ms \mc{F}_+} ^TE_{-} & {\bf 0}  \rightm \in \reals^{(m+c) \times (m+c)} ,$$
}
where $m = |\mc{E}_{-}|$ and $c$ is the number of connected components of $\mc{G}_+$.
We assume that $\mc{G}_+$ is not connected, and thus $N_{\ms \mc{F}_+}$ contains $c$ columns.  Partition the matrix as $N_{\ms \mc{F}_+} = \leftm{ccc} {\bf n}_1 & \cdots & {\bf n}_c\rightm$ and recall that $[E_{-}^T{\bf n}_i]_k = \pm 1$ if and only if $k = (u,v) \in \mc{E}_{-}$ such that $u$ and $v$ are not in the same components of $\mc{G}_+$.  Denote by $\mbox{CUT}_i \subseteq \mc{E}_{-}$ as the set of negative weight edges used to form a cut with the $i$th component of $\mc{G}_+$, and let $\mbox{CUT} = \cup_i \mbox{CUT}_i$.  Then an expression for the quadratic form of the matrix of interest is 
{
\beas
x^T \leftm{cc}  |W_{-}|^{-1} &E_{-}^TN_{\ms \mc{F}_+}  \\ N_{\ms \mc{F}_+} ^TE_{-} & {\bf 0}  \rightm x &=&\hspace{-5pt} \sum_{i \in \mc{E}_{-} } |\mc{W}_{-}(i)|^{-1} x_i^2 +  \sum_{k \in \mbox{\small CUT}_1}  \pm 2 x_kx_{m+1} + \cdots + \sum_{k \in  \mbox{\small CUT}_c} \pm 2 x_kx_{m+c}.
\eeas
}
From the quadratic form, it is now clear that the elements of the vector $x_i$ for $i = m+1, \ldots, m+c$ can be arbitrarily chosen to make the inequality negative.  Therefore, there exists at least one negative eigenvalue and the matrix in (\ref{lmi_big}) is indefinite.  From Corollary \ref{cor:psd3} we can conclude that the weighted graph Laplacian is indefinite independent of the value of the negative weights. \hfill \QEDclosed
\end{proof}

Theorem \ref{thm:negativecuts} shows that if any of the negative weight edges forms a cut in the graph, then the Laplacian matrix must have negative eigenvalues.  A particular class of graphs satisfying the conditions of Theorem \ref{thm:negativecuts} are the balanced signed graphs.
\begin{corollary}\label{cor:balanced}
If a signed graph $\mc{G}$ is balanced then $L(\mc{G})$ is indefinite for any choice of negative edge weights.
\end{corollary}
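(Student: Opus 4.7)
The plan is to reduce the corollary directly to Theorem~\ref{thm:negativecuts} by exhibiting the positive subgraph $\mc{G}_{+}$ as disconnected whenever $\mc{G}$ is a balanced signed graph with $\mc{E}_{-} \neq \emptyset$. By the definition of balance, there is a partition $\mc{V} = \mc{V}_1 \cup \mc{V}_2$ such that every edge in $\mc{E}_{+}$ has both endpoints in the same block, while every edge in $\mc{E}_{-}$ crosses between the two blocks. When $\mc{E}_{-} \neq \emptyset$, both blocks are necessarily nonempty; since no edge of $\mc{G}_{+}$ leaves either block, no vertex of $\mc{V}_1$ is reachable in $\mc{G}_{+}$ from any vertex of $\mc{V}_2$. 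Hence $\mc{G}_{+}$ has at least two connected components, i.e., the negative edges form a (nonempty) cut of $\mc{G}$.

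Assuming further that $\mc{G}$ is connected, the hypotheses of Theorem~\ref{thm:negativecuts} are then in force: $\mc{G}$ is connected, both $\mc{E}_{+}$ and $\mc{E}_{-}$ are nonempty, and $\mc{G}_{+}$ is disconnected. Applying that theorem yields the indefiniteness of $L(\mc{G})$ for any choice of negative edge weights, which is exactly the claim. For the more general setting in which $\mc{G}$ itself is not connected, $L(\mc{G})$ decomposes blockwise across the connected components of $\mc{G}$; one can then apply the same argument to any component containing at least one negative edge — such a component inherits a balanced signed structure in which the positive part is disconnected — and obtain a negative eigenvalue in the corresponding diagonal block and hence in $L(\mc{G})$ as a whole.

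I do not anticipate a genuine technical obstacle here, since the corollary is essentially a structural translation of the defining property of a balanced signed graph into the cut-condition hypothesis of Theorem~\ref{thm:negativecuts}; all of the analytic work has already been done in the derivation of (\ref{lmi_big}) and in the quadratic-form argument inside the proof of that theorem. The only minor subtlety is the degenerate case $\mc{E}_{-} = \emptyset$, in which the statement is vacuous since no negative weights exist and balance collapses to a trivial partition of a positively-weighted graph whose Laplacian is positive semi-definite; I would dispose of this with a single introductory sentence before giving the main one-paragraph argument.
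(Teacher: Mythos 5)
Your proposal is correct and follows exactly the route the paper intends: the corollary is stated as an immediate consequence of Theorem~\ref{thm:negativecuts}, since in a balanced signed graph every positive edge stays within one block of the partition, so $\mc{G}_{+}$ is disconnected and the negative edges form a cut. Your handling of the degenerate cases ($\mc{E}_{-}=\emptyset$, disconnected $\mc{G}$) simply makes explicit what the paper leaves implicit.
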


\subsection{Effective Resistance and the Stability of $\Sigma(\mc{G})$}
The main result of Section \ref{subset:signedgraphs} provides an analytical justification of what may be considered an intuitive result.  That is, if the negative weight edges form a cut in the graph, then the weighted Laplacian will be indefinite; i.e., $\Sigma(\mc{G})$ will be unstable.  In this section, we reveal a more general condition on the negative edge weights that can lead to an indefinite weighted Laplacian.  This condition turns out to be related to the notion of the \emph{effective resistance} of a graph.  Results from this section were recently reported in \cite{Zelazo2014}, and thus the reader is referred to that work for related proofs.

It is well known that the weighted Laplacian of a graph can be interpreted as a resistor network \cite{Klein1993}.   
Each edge in the network can be thought of as a resistor with resistance equal to the inverse of the edge weight, $r_k = \mc{W}(k)^{-1}=w_k^{-1}$ for $k \in \mc{E}$.\footnote{Thus, the edge weight $w_k$ can be interpreted as an \emph{admittance}.}  The resistance between any two pairs of nodes can be determined using standard methods from electrical network theory \cite{Klein1993}.  It may also be computed using the Moore-Penrose pseudo-inverse of the graph Laplacian, denoted $L(\mc{G})^{\dagger}$.
\begin{definition}[\cite{Klein1993}]\label{def:effec_resistance}
The effective resistance between nodes $u,v \in \mc{V}$ in a weighted graph $\mc{G} = (\mc{V},\mc{E},\mc{W})$ is 
\beas
\mc{R}_{uv}(\mc{G}) &=& ({\bf e}_u - {\bf e}_v)^TL^{\dagger}(\mc{G})({\bf e}_u - {\bf e}_v) \\
&=& [L^{\dagger}(\mc{G})]_{uu}-2[L^{\dagger}(\mc{G})]_{uv}+[L^{\dagger}(\mc{G})]_{vv},
\eeas
where ${\bf e}_u$ is the indicator vector for node $u$, that is ${\bf e}_u = 1$ in the $u$ position and 0 elsewhere.
\end{definition}

Our first result shows how the effective resistance between two nodes is related to the matrix $\R W \R^T$ and the essential edge Laplacian. 
\begin{proposition}[\cite{Zelazo2014}]\label{prop:lap_pseudoinv}
Let $\mc{G}$ be a connected graph and assume $s(L(\mc{G}))=(n_+,n_-,1)$.  Then \bea\label{lap_pseudoinv}
L^{\dagger}(\mc{G})&=& (\Etree^L)^T\left(\R W\R^T\right)^{-1}\Etree^L = (\Etree^L)^{T} L_{\ms ess}(\mc{T})^{-1}\Etree^T.
\eea
\end{proposition}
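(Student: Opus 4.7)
The plan is to verify directly that
$M := (\Etree^L)^T(\R W\R^T)^{-1}\Etree^L$
satisfies the four Moore--Penrose conditions $LML=L$, $MLM=M$, $(LM)^T=LM$, $(ML)^T=ML$, which together uniquely characterize $L(\mc{G})^{\dagger}$. The second equality in (\ref{lap_pseudoinv}) will then fall out of the definition of $L_{\ms ess}(\mc{T})$.

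Two structural ingredients drive the calculation. First, because the incidence vector of every non-tree edge lies in $\myspan[\Etree]$ (it is the signed sum of tree-edge incidence vectors along its fundamental cycle), one has $\Ecycle = \Etree\Tcycle$, and hence $E = \Etree\R$. This yields the compact factorization $L(\mc{G}) = \Etree(\R W\R^T)\Etree^T$. Second, the identity $\Etree^L\Etree = I$ is immediate from $\Etree^L = L_e(\mc{T})^{-1}\Etree^T = (\Etree^T\Etree)^{-1}\Etree^T$. Under the hypothesis $s(L(\mc{G})) = (n_+,n_-,1)$ with $c=1$ connected component, Theorem~\ref{thm:signature1} gives $s(\R W\R^T)=(n_+,n_-,0)$, so $\R W\R^T$ is invertible and $M$ is well-defined.

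With these in hand, the verification is essentially telescoping. Using $\Etree^T(\Etree^L)^T = (\Etree^L\Etree)^T = I$ and $\Etree^L\Etree = I$, I would compute
\begin{align*}
LM &= \Etree(\R W\R^T)\Etree^T(\Etree^L)^T(\R W\R^T)^{-1}\Etree^L = \Etree\Etree^L,\\
ML &= (\Etree^L)^T(\R W\R^T)^{-1}\Etree^L\Etree(\R W\R^T)\Etree^T = (\Etree^L)^T\Etree^T.
\end{align*}
Both products collapse to $\Etree(\Etree^T\Etree)^{-1}\Etree^T$, the orthogonal projector onto $\myspan[\Etree] = \myspan[L]$, so $LM$ and $ML$ are automatically symmetric (axioms 3 and 4). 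For the remaining axioms, $LML = (\Etree\Etree^L)L = L$ by applying $\Etree^L\Etree = I$ once more to the rightmost factor $\Etree(\R W\R^T)\Etree^T$, and $MLM = M(\Etree\Etree^L) = M$ by applying it in the middle. Hence $M = L^{\dagger}(\mc{G})$.

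The second equality in (\ref{lap_pseudoinv}) then follows by substituting $L_{\ms ess}(\mc{T})^{-1} = (\R W\R^T)^{-1}L_e(\mc{T})^{-1}$ (a direct consequence of $L_{\ms ess}(\mc{T}) = L_e(\mc{T})\R W\R^T$) together with $\Etree^L = L_e(\mc{T})^{-1}\Etree^T$. The only nontrivial step is invoking Theorem~\ref{thm:signature1} to guarantee invertibility of $\R W\R^T$ — this is precisely where the nullity-one hypothesis on $L(\mc{G})$ is consumed; once that is available, all four Moore--Penrose identities reduce to the single algebraic fact $\Etree^L\Etree = I$, so there is no genuine obstacle.
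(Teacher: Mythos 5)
Your proof is correct, and it takes a genuinely different (and in one respect more careful) route than the paper's. The paper's proof leans entirely on the similarity transformation of Proposition \ref{prop:edgelap_sim2}: it writes $L(\mc{G})=V\,\mbox{\bf diag}\left(L_{\ms ess}(\mc{T}),0\right)V^{-1}$, inverts the nonzero block, and asserts that (\ref{lap_pseudoinv}) ``follows directly.'' You instead verify the four Moore--Penrose axioms head-on from the factorization $L(\mc{G})=\Etree\left(\R W\R^T\right)\Etree^T$ and the single identity $\Etree^L\Etree=I$. Both arguments consume the nullity-one hypothesis in the same place, via Theorem \ref{thm:signature1}, to invert $\R W\R^T$. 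What your approach buys is rigor on a genuinely delicate point: since $V$ is not orthogonal, $V\,\mbox{\bf diag}\left(L_{\ms ess}(\mc{T})^{-1},0\right)V^{-1}$ is a priori only a reflexive generalized inverse of $L(\mc{G})$ (a $\{1,2\}$-inverse), and one must still check that its products with $L(\mc{G})$ are \emph{orthogonal} projections before identifying it with the Moore--Penrose pseudo-inverse. Your computation makes this explicit by showing $LM=ML=\Etree(\Etree^T\Etree)^{-1}\Etree^T$, the orthogonal projector onto $\mbox{\bf IM}[\Etree]=\mbox{\bf IM}[L(\mc{G})]$, which is exactly the step the paper's ``follows directly'' glosses over. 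The small price you pay is having to justify $E=\Etree\R$ via the fundamental-cycle argument, which the paper inherits for free from Proposition \ref{prop:edgelap_sim2}; that justification is standard and you state it correctly. No gaps.
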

\begin{proof}
From Theorem \ref{thm:signature1} we conclude the essential edge Laplacian is invertible and it follows that 
$$ L_{\ms ess}(\mc{T})^{-1} = \left(\R W\R^T\right)^{-1}L_e(\mc{T})^{-1},$$
and
$$L^{\dagger}(\mc{G}) = V \leftm{cc} \left(\R W\R^T\right)^{-1}L_e(\mc{T})^{-1} & {\bf 0} \\ {\bf 0} & 0 \rightm V^{-1},$$
where $S$ is the transformation matrix defined in Proposition \ref{prop:edgelap_sim2}, and (\ref{lap_pseudoinv}) follows directly. \QEDclosed
\end{proof}

From Proposition \ref{prop:lap_pseudoinv}, it is clear that the effective resistance between nodes $u,v \in \mc{V}$ can be expressed as
{
\bea\label{effec_res_rwr}
 \mc{R}_{uv}(\mc{G}) \hspace{-10pt}&=\hspace{-10pt}& ({\bf e}_u\hspace{-2pt} -\hspace{-2pt} {\bf e}_v)^T(\Etree^L)^T \left(\hspace{-2pt}\R W\R^T\hspace{-2pt}\right)^{-1}\hspace{-5pt}\Etree^L({\bf e}_u \hspace{-2pt}-\hspace{-2pt} {\bf e}_v). 
 \eea
}
We now present a result showing that this equivalent characterization of the effective resistance is useful for understanding the definiteness of the weighted Laplacian.

\begin{figure}[!t]
\begin{center}
\includegraphics[width=0.5\textwidth]{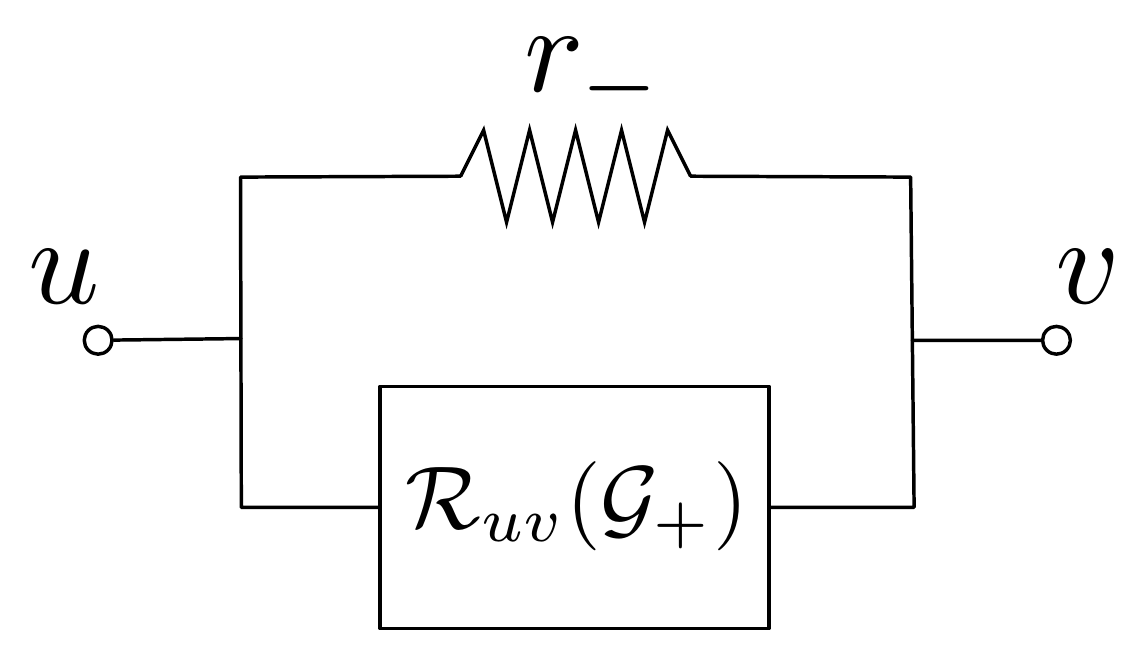}
\caption{Resistive network interpretation with one negative weight edge.}
\label{fig.effectiveresistance}
\end{center}
\end{figure}

\begin{theorem}[\cite{Zelazo2014}]\label{thm:one_negedge}
Assume that $\mc{G}_+$ is connected and $|\mc{E}_{-}|=1$ with $\mc{E}_{-} = \{e_{-}=(u,v)\}$.  Let $\mc{R}_{uv}(\mc{G}_+)$ denote the effective resistance between nodes $u,v \in \mc{V}$ over the graph $\mc{G}_+$.  Then $L(\mc{G})$ is positive semi-definite if and only if $|\mc{W}(e_{-})| \leq \mc{R}_{uv}^{-1}(\mc{G}_+)$.
\end{theorem}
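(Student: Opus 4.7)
The plan is to start from the LMI characterization of positive semi-definiteness in Corollary~\ref{cor:psd3} and reduce it, via a Schur complement, to a scalar inequality that we recognize as the effective resistance condition. Because $\mc{G}_+$ is assumed connected, its spanning forest is a spanning tree $\mc{T}_+$ and the middle block-row/column of (\ref{lmi_big}) disappears, so the PSD test collapses to the block inequality (\ref{cor:psd4_lmi}):
\[
\leftm{cc} |W_{-}|^{-1} &  E_{-}^T(\Eforrestp^L)^T  \\   \Eforrestp^L E_{-} &  \Rplus W_+ \Rplus^T   \rightm \geq 0.
\]
Since $|\mc{E}_-|=1$, the $(1,1)$ block is the scalar $|\mc{W}(e_-)|^{-1}$ and $E_-$ is the single column $\mathbf{e}_u - \mathbf{e}_v$.

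Next I would apply the Schur complement with respect to the lower-right block. The key preliminary observation is that $\Rplus W_+ \Rplus^T = L_{\ms ess}(\mc{T}_+)$ is strictly positive definite: by Theorem~\ref{thm:signature1}, the signature of this matrix matches that of $L(\mc{G}_+)$ minus the null-space dimension contributed by connectedness, and since $\mc{G}_+$ is connected with strictly positive weights, $s(L(\mc{G}_+))=(n-1,0,1)$, hence $\Rplus W_+ \Rplus^T$ has full positive signature. This makes the Schur complement valid and reduces the LMI to the scalar inequality
\[
|\mc{W}(e_-)|^{-1} \; - \; (\mathbf{e}_u - \mathbf{e}_v)^T(\Eforrestp^L)^T \left(\Rplus W_+ \Rplus^T\right)^{-1} \Eforrestp^L (\mathbf{e}_u - \mathbf{e}_v) \;\geq\; 0.
\]

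Finally I would identify the quadratic form on the right-hand side as exactly $\mc{R}_{uv}(\mc{G}_+)$. Proposition~\ref{prop:lap_pseudoinv} applied to the connected graph $\mc{G}_+$ (whose spanning tree is $\mc{T}_+$) gives $L^{\dagger}(\mc{G}_+) = (\Eforrestp^L)^T(\Rplus W_+ \Rplus^T)^{-1}\Eforrestp^L$, and Definition~\ref{def:effec_resistance} then yields
\[
(\mathbf{e}_u - \mathbf{e}_v)^T L^{\dagger}(\mc{G}_+)(\mathbf{e}_u - \mathbf{e}_v) \;=\; \mc{R}_{uv}(\mc{G}_+).
\]
Substituting, the LMI holds if and only if $|\mc{W}(e_-)|^{-1} \geq \mc{R}_{uv}(\mc{G}_+)$, i.e., $|\mc{W}(e_-)| \leq \mc{R}_{uv}^{-1}(\mc{G}_+)$, and by Corollary~\ref{cor:psd3} this is equivalent to $L(\mc{G})\geq 0$.

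The only real obstacle is verifying that the Schur complement step is legitimate, i.e., that $\Rplus W_+ \Rplus^T$ is invertible (in fact positive definite), so that ``PSD of the block matrix'' is equivalent to ``PSD of the Schur complement''; this is where the connectedness of $\mc{G}_+$ is crucially used via Theorem~\ref{thm:signature1}. Once that is in place, the rest is a direct rewriting of the quadratic form via the pseudoinverse identity of Proposition~\ref{prop:lap_pseudoinv}.
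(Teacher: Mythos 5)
Your proof is correct and follows essentially the same route as the paper's: reduce to the two-block LMI (\ref{cor:psd4_lmi}) (the paper gets there by a congruence transformation of the Schur-complement form of (\ref{signed_laplacian}), while you invoke Corollary~\ref{cor:psd3} directly), take the Schur complement with respect to $\Rtplus W_+ \Rtplus^T$, and identify the resulting quadratic form as $\mc{R}_{uv}(\mc{G}_+)$ via Proposition~\ref{prop:lap_pseudoinv}. Two minor notes: it is the \emph{last} block row/column of (\ref{lmi_big}) (the one containing $E_-^T N_{\ms \mc{F}_+}$) that vanishes when $\mc{G}_+$ is connected, not the middle one; and your explicit check that $\Rtplus W_+ \Rtplus^T > 0$ via Theorem~\ref{thm:signature1}, which legitimizes the second Schur complement, is a point the paper leaves implicit.
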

\begin{proof}
From (\ref{signed_laplacian}) we have
$$L(\mc{G})= \Etreep\Rtplus W_+ \Rtplus^T\Etreep^T - E_-|\mc{W}(e_-)|E_-^T.$$
By the Schur complement, $L(\mc{G}) \geq 0$ if and only if
$$ \leftm{cc} |\mc{W}(e_-)|^{-1} & E_-^T \\ E_- & \Etreep\Rtplus W_+ \Rtplus^T\Etreep^T \rightm \geq 0.$$
Applying a congruent transformation to the above matrix using 
\beas
F &=& \leftm{cc} I & 0 \\ 0 & \leftm{cc} (\Etreep^L)^T &\ones \rightm \rightm
\eeas
leads to the following LMI condition,
$$ \leftm{cc} |\mc{W}(e_{-})|^{-1} &  E_{\_}^T(\Etreep^L)^T  \\   \Etreep^L E_{-} &  \Rtplus W_+ \Rtplus^T   \rightm  \geq 0.$$
Applying again the Schur complement, we obtain the equivalent condition that the matrix
{
$$ |\mc{W}(e_{-})|^{-1} - E_{-}^T(\Etreep^L)^T(\Rtplus W_+ \Rtplus^T)^{-1}\Etreep^L E_{\_}$$
}
must also be positive semi-definite.
Observe now that 
{
$$E_{-}^T(\Etreep^L)^T(\Rtplus W_+ \Rtplus^T)^{-1}\Etreep^L E_{-} = \mc{R}_{uv}(\mc{G}_+).$$
}
This then leads to the desired conclusion that $ |\mc{W}(e_{-})|  \leq  \mc{R}_{uv}^{-1}(\mc{G}_+)$.\hfill \QEDclosed
\end{proof}

The above result has a very intuitive physical interpretation.  The entire network $\mc{G}_+$ can be considered as a single lumped resistor between nodes $u$ and $v$ with resistance $\mc{R}_{uv}(\mc{G}_+)$.  The negative-weight edge can thus be thought of as adding another resistor in parallel between the nodes, as in Figure \ref{fig.effectiveresistance}.  The equivalent resistance between $u$ and $v$ is well-known to be 
$$\mc{R}_{uv}(\mc{G}) = \frac{\mc{R}_{uv}(\mc{G}_+)r_{-}}{\mc{R}_{uv}(\mc{G}_+)+r_{-}}.$$
If $r_{-}$ is a negative resistor, then choosing $r_{-} = -\mc{R}_{uv}(\mc{G}_+)$ corresponds to an equivalent resistance that is infinite, i.e., an \emph{open circuit}.  The open circuit can be thought of as a cut between the terminals $u$ and $v$.  This interpretation is inline also with the results of Theorem \ref{thm:negativecuts}.  Indeed, in this case the graph $\mc{G}_+$ is not connected and the effective resistance between any two nodes connected by a negative edge weight over $\mc{G}_+$ is infinite, i.e., $\mc{R}_{uv}^{-1}(\mc{G}_+)=0$.

The result in Theorem \ref{thm:one_negedge} can be generalized to multiple negative weight edges with some additional assumptions on how those edges are distributed in the graph.  In this direction, for each edge $k=(u,v)\in \mc{E}_-$, define the set $\mc{P}_{k} \subseteq \mc{E}_+$ to be the set of all edges in $\mc{G}_+$ that belong to a path connecting nodes $u$ to $v$, 
Let $\mc{G}_+(\mc{P}_{k}) \subseteq \mc{G}_+$ be the subgraph induced by the edges in $\mc{P}_{k}$.\footnote{Thus, $\mc{G}_+(\mc{P}_{k}) = (\mc{V}(\mc{P}_{k}),\mc{P}_{k})$ where $\mc{V}(\mc{P}_{k})\subseteq \mc{V}$ are the nodes incident to edges in $P_{k}$.}
Note that if $\mc{P}_{k} \cap \mc{P}_{{k'}} = \emptyset$ for edges with distinct nodes (i.e., $k=(u,v)$ and ${k'}=({u'},{v'}) \in \mc{E}_-$), then there exists no cycle in $\mc{G}_+$ containing the nodes $u,v,{u'},{v'}$.
An important class of graphs that can admit such a partition are the \emph{cactus graphs} \cite{Markov2007}.  Using this characterization, the following statement on effective resistance with multiple negative weight edges follows.

\begin{theorem}[\cite{Zelazo2014}]\label{thm:multiple_edge}
Assume that $\mc{G}_+$ is connected and $|\mc{E}_-|>1$.  Let $\mc{R}_{k}(\mc{G}_+)$ denote the effective resistance between nodes $u,v \in \mc{V}$ over the graph $\mc{G}_+$ with $k=(u,v)\in \mc{E}_-$, and let $\mc{\bf R} = \mbox{\bf diag}\{\mc{R}_1(\mc{G}_+),\ldots,\mc{R}_{\ms |\mc{E}_-|}(\mc{G}_+)\}$.  Furthermore, assume that $\mc{P}_i \cap \mc{P}_j = \emptyset$ for all $i,j \in \mc{E}_-$, where $\mc{P}_i$ is the set of all edges belonging to a path connecting the nodes incident to edge $i \in \mc{E}_{\ms \mc{\Delta}}$. 
Then the weighted Laplacian is positive semi-definite if and only if $|W_-| \leq \mc{\bf R}^{-1}$.\end{theorem}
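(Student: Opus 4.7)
The plan is to mirror the proof of Theorem \ref{thm:one_negedge}, with the disjoint-path hypothesis doing new work only in the very last step. First I would write $L(\mc{G}) = \Etreep \Rtplus W_+ \Rtplus^T \Etreep^T - E_- |W_-| E_-^T$ via (\ref{signed_laplacian}) and apply the Schur complement to obtain the equivalent condition
\begin{equation*}
\leftm{cc} |W_-|^{-1} & E_-^T \\ E_- & \Etreep \Rtplus W_+ \Rtplus^T \Etreep^T \rightm \geq 0.
\end{equation*}
Using the same congruent transformation $F = \leftm{cc} I & 0 \\ 0 & \leftm{cc} (\Etreep^L)^T & \ones \rightm \rightm$ as in the single-edge case (this is valid since $\mc{G}_+$ is connected), and then a second Schur complement, reduces the inequality to
\begin{equation*}
|W_-|^{-1} - E_-^T (\Etreep^L)^T (\Rtplus W_+ \Rtplus^T)^{-1} \Etreep^L E_- \;\geq\; 0,
\end{equation*}
which by Proposition \ref{prop:lap_pseudoinv} is precisely $|W_-|^{-1} - E_-^T L^{\dagger}(\mc{G}_+) E_- \geq 0$.

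The heart of the argument is then to show that under the hypothesis $\mc{P}_i \cap \mc{P}_j = \emptyset$ for all distinct $i,j \in \mc{E}_-$, the matrix $M := E_-^T L^{\dagger}(\mc{G}_+) E_-$ is diagonal and equals $\mc{\bf R}$. The diagonal entries $M_{kk} = \mc{R}_k(\mc{G}_+)$ follow directly from (\ref{effec_res_rwr}). For the off-diagonal entries, I would invoke the resistor-network interpretation: $M_{ij}$ is the potential difference between the endpoints of edge $j \in \mc{E}_-$ when a unit current is injected and extracted at the endpoints of edge $i \in \mc{E}_-$ in the resistor network $\mc{G}_+$. Because $\mc{P}_i$ already contains \emph{every} edge that lies on any $u_i\text{-}v_i$ path, current driven between these endpoints flows only through the subgraph $\mc{G}_+(\mc{P}_i)$. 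The disjoint-path assumption then means that $\mc{G}_+(\mc{P}_j)$ carries no current at all; equivalently, $\mc{G}_+(\mc{P}_j)$ is attached to $\mc{G}_+(\mc{P}_i)$ only through a single cut vertex (or not at all), so $u_j$ and $v_j$ sit at the same potential and $M_{ij} = 0$.

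Once $M = \mc{\bf R}$ is established, the remaining step is immediate: both $|W_-|^{-1}$ and $\mc{\bf R}$ are diagonal with positive entries, so $|W_-|^{-1} - \mc{\bf R} \geq 0$ collapses entry-wise to $|W_-| \leq \mc{\bf R}^{-1}$, which is the claimed equivalence.

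The main obstacle will be giving a rigorous algebraic backing for the vanishing of the off-diagonal (transfer) resistances, since the electrical intuition above still needs a formal certificate. A clean route would be to exploit the block/cut-vertex decomposition of $\mc{G}_+$ forced by the disjoint-path assumption: one can glue the pseudo-inverses of the blocks using the standard additivity of effective resistance across cut vertices, and then directly evaluate $(\mathbf{e}_{u_i}-\mathbf{e}_{v_i})^T L^{\dagger}(\mc{G}_+)(\mathbf{e}_{u_j}-\mathbf{e}_{v_j})$ block-by-block. Alternatively, one can appeal to the Kirchhoff / spanning-tree formula for the pseudo-inverse and observe that every term contributing to $M_{ij}$ would require a structure simultaneously using edges from both $\mc{P}_i$ and $\mc{P}_j$, which is ruled out by the hypothesis.
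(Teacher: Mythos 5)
Your proposal is correct and follows essentially the same route as the paper: the same double Schur-complement reduction to $|W_-|^{-1} - E_-^T(\Etreep^L)^T(\Rtplus W_+ \Rtplus^T)^{-1}\Etreep^L E_- \geq 0$, followed by showing this middle matrix equals the diagonal matrix $\mathbf{R}$ so the LMI collapses entry-wise. The only difference is cosmetic: where you justify the vanishing off-diagonal (transfer) resistances by the current-flow/potential argument, the paper argues that the disjoint-path hypothesis forces $\Etreep^L E_-$ to have a partitioned (disjoint-support) column structure over the subgraphs $\mc{T}_+ \cap \mc{G}_+(\mc{P}_k)$ --- both justifications are left at roughly the same level of informality.
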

\begin{proof}
As in the proof of Theorem \ref{thm:one_negedge}, we consider the LMI
{
\bea\label{schurlmi}
|W_-|^{-1} - E_{-}^T(\Etreep^L)^T(\Rtplus W_+ \Rtplus^T)^{-1}\Etreep^L E_{-} \geq 0 
\eea
}
Due to the location of the negative weight edges assumed in the graph, it can be verified that the matrix $E_{-}^T(\Etreep^L)^T(\Rtplus W_+ \Rtplus^T)^{-1}\Etreep^L E_{-}$ is in fact a diagonal matrix with $\mc{R}_{k}(\mc{G}_+)$ for $k=1,\ldots, |\mc{E}_-|$ on the diagonal, denoted as ${\bf R}$.  To see this, observe that $\Etreep^LE_{-}$ is a $\{0,\pm 1\}$ matrix that describes which edges in the spanning tree $\mc{T}_+$ can create a cycle with each edge in $\mc{E}_{-}$ (this is related to the matrix $\Tcycle$ used in Proposition \ref{prop:edgelap_sim2} since $\mc{T}_+=\mc{T}$ and therefore $\mc{E}_- \subseteq \mc{E}_c$).  Observe also that an edge $k \in \mc{E}_-$ can only be incident to nodes in the subgraph $\mc{G}_+(P_k)$.  Therefore, the matrix $\Etreep^LE_{-}$ has a partitioned structure (after a suitable relabeling of the edges) such that the $k$th column of $\Etreep^LE_{-}$ will contain non-zero elements corresponding to edges in $\mc{T}_+ \cap \mc{G}_+(P_k)$.
The LMI condition can now be expressed as $|W_-|^{-1} \geq {\bf R}$ which implies that $|W_-| \leq  {\bf R}^{-1}$ concluding the proof.\QEDclosed
\end{proof}

Theorem \ref{thm:multiple_edge} also has the same physical interpretation as Theorem \ref{thm:one_negedge}.  Indeed, the resistance between two nodes contained in a sub-graph $\mc{G}_+(\mc{P}_k)$ is not determined by any other edges in the network.  Both Theorems \ref{thm:one_negedge} and \ref{thm:multiple_edge} provide a clear characterization of how negative weight edges can impact the definiteness of the weighted Laplacian, and how that is related to the effective resistance in the graph.  In fact, from (\ref{cor:psd4_lmi}) we also can observe an additional fact relating the total effective resistance between all nodes incident to edges in $\mc{E}_-$ and the definiteness of the graph, independent of the actual location of these edges in the network.

\begin{corollary}[\cite{Zelazo2014}]\label{cor:total_resist}
Assume that $\mc{G}_+$ is connected.  If $L(\mc{G})\geq 0$, then  
$ \sum_{k \in \mc{E}_-} |\mc{W}(k)|^{-1} \geq {\bf R}_{tot}^{\ms \mc{E}_-},$
where 
{
$${\bf R}_{tot}^{\ms \mc{E}_-}\hspace{-3pt}=\hspace{-3pt}\mbox{\bf trace}\hspace{-3pt}\left[ E_{-}^T(\Etreep^L)^T(\Rtplus W_+ \Rtplus^T)^{-1}\Etreep^L E_{-}\right] .$$
}
\end{corollary}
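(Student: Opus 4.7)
The plan is to observe that the corollary follows as a trace inequality from the same LMI that drove the proofs of Theorems \ref{thm:one_negedge} and \ref{thm:multiple_edge}, but without invoking the path-disjointness hypothesis. The resulting statement will be a necessary (but not sufficient) condition on the magnitudes of the negative edge weights that applies for arbitrary placement of the edges in $\mc{E}_-$.

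First I would isolate the appropriate matrix inequality. Since $\mc{G}_+$ is assumed connected, Corollary \ref{cor:psd3} specializes to (\ref{cor:psd4_lmi}), namely that $L(\mc{G}) \geq 0$ holds if and only if
\[
\begin{bmatrix} |W_{-}|^{-1} & E_{-}^T(\Eforrestp^L)^T \\ \Eforrestp^L E_{-} & \Rplus W_+ \Rplus^T \end{bmatrix} \geq 0.
\]
Since $\mc{G}_+$ is connected, Theorem \ref{thm:signature1} guarantees that $\Rtplus W_+ \Rtplus^T$ is positive definite, so a standard Schur complement argument (identical to the first steps in the proofs of Theorems \ref{thm:one_negedge} and \ref{thm:multiple_edge}) reduces this to the condition
\[
|W_-|^{-1} - E_{-}^T(\Etreep^L)^T(\Rtplus W_+ \Rtplus^T)^{-1}\Etreep^L E_{-} \;\geq\; 0.
\]

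Next I would simply take the trace of the above matrix inequality. The trace of any positive semi-definite matrix is nonnegative, so
\[
\mbox{\bf trace}\left[|W_-|^{-1}\right] \;\geq\; \mbox{\bf trace}\left[ E_{-}^T(\Etreep^L)^T(\Rtplus W_+ \Rtplus^T)^{-1}\Etreep^L E_{-}\right] = {\bf R}_{tot}^{\ms \mc{E}_-}.
\]
Because $|W_-|$ is diagonal with entries $|\mc{W}(k)|$ for $k \in \mc{E}_-$, the left-hand side is exactly $\sum_{k \in \mc{E}_-} |\mc{W}(k)|^{-1}$, yielding the claim.

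There is no real obstacle here: the content of the corollary is already packaged in the Schur-reduced LMI from the proof of Theorem \ref{thm:multiple_edge}. What makes the statement interesting and worth isolating is that, unlike in Theorem \ref{thm:multiple_edge}, the trace step does not require the cross term $E_{-}^T(\Etreep^L)^T(\Rtplus W_+ \Rtplus^T)^{-1}\Etreep^L E_{-}$ to be diagonal, so the path-disjointness assumption $\mc{P}_i\cap\mc{P}_j=\emptyset$ can be dropped entirely; the price is that only a scalar necessary condition, involving the sum of reciprocal magnitudes versus the total effective resistance ${\bf R}_{tot}^{\ms \mc{E}_-}$, survives. The only care point worth flagging when writing the formal proof is to note explicitly that $\Rtplus W_+ \Rtplus^T \succ 0$ (so that its inverse exists and the Schur complement step is legitimate), which is immediate from the connectedness of $\mc{G}_+$ combined with Theorem \ref{thm:signature1}.
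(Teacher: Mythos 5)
Your proposal is correct and follows essentially the same route as the paper, which simply takes the trace of the Schur-complement inequality (\ref{schurlmi}); you have merely spelled out the intermediate reduction from (\ref{cor:psd4_lmi}) and the positive-definiteness of $\Rtplus W_+ \Rtplus^T$ that the paper leaves implicit.
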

\begin{proof}
This follows directly from the trace of (\ref{schurlmi}).
\end{proof}

Corollary \ref{cor:total_resist} indicates that a weighted Laplacian with negative weights can still be positive semi-definite, and in that case the total magnitude of the negative weight edges is closely related to the total effective resistance in the network (defined over the nodes incident to $\mc{E}_-$). The notion of total effective resistance has also appeared in works characterizing the $\mc{H}_2$ performance of certain multi-agent networks \cite{Bamieh2009, Barooah2006a, Siami2013}.  While Corollary \ref{cor:total_resist} only provides a sufficient condition for the definiteness of the weighted Laplacian, it nevertheless reinforces its connection to the notion of effective resistance.

\section{On the Robust Stability of the Uncertain Consensus Protocol}\label{sec:small gain}

The results of the previous section provide the correct foundation to now consider the robust stability of the uncertain consensus models presented in Section \ref{subsec:uncertain_edge}.  A natural approach for analyzing the robust stability is via the celebrated \emph{small gain theorem}.  While the small gain theorem often provides very conservative results, we demonstrate in this section that for certain classes of uncertainties the small gain result is in fact an exact condition.

\subsection{Robust Stability of $\Sigma_{\ms \mc{F}}(\mc{G},\Delta)$}
We now proceed with an analysis of the system $\Sigma_{\ms \mc{F}}(\mc{G},\Delta)$ in (\ref{uncertain_edge_agreement}).  Based on the system interconnection shown in Figure \ref{fig:uncertain}, the map from the exogenous inputs $\w(t)$ to the controlled output $z(t)$ in the presence of the structured uncertainty $\Delta \in {\bf \Delta}$ can be characterized by the upper fractional transformation \cite{Dullerud2000},
\bea \label{UFT}
\overline{S}(\Sigma_{\ms \mc{F}}(\mc{G}),\Delta) = M_{22}+M_{21}\Delta \left(I-M_{11}\Delta \right)^{-1}M_{12},
\eea
where
{
\beas
M_{11}(s) &\hspace{-5pt}=&\hspace{-5pt} P^TR_{\ms (\mc{F},\mc{C})}^T(sI+L_{\ms ess}(\mc{F}) )^{-1}L_e(\mc{F})R_{\ms (\mc{F},\mc{C})}P \\
M_{12}(s) &\hspace{-5pt}=& \hspace{-5pt}P^TR_{\ms (\mc{F},\mc{C})}^T(sI+L_{\ms ess}(\mc{F}) )^{-1}E(\mc{F})^T \\
M_{21}(s) &\hspace{-5pt}=&\hspace{-5pt} E(\mc{G}_{o})^T (\Eforrest^L)^T(sI+L_{\ms ess}(\mc{F}) )^{-1}L_e(\mc{F})R_{\ms (\mc{F},\mc{C})}P  \\
M_{22}(s) &\hspace{-5pt}=&\hspace{-5pt} E(\mc{G}_{o})^T (\Eforrest^L)^T(sI+L_{\ms ess}(\mc{F}) )^{-1}E(\mc{F})^T.
\eeas
}
\indent This representation can lead directly to a \emph{small-gain} interpretation for the allowable edge-weight uncertainties that guarantees the system is robustly stable.  In particular, a sufficient condition for determining whether $(I-M_{11}\Delta)$ has a stable proper inverse is to ensure that 
$\|M_{11}(s)\Delta\|_{\infty} < 1.$

We now cite a result from \cite{Zelazo2009b} that gives insight on the $\mc{H}_{\infty}$ norm of the transfer function matrix $M_{11}(s)$.

\begin{proposition}[\cite{Zelazo2009b}]\label{prop:tf_norm}
%
$$\|M_{11}(s) \|_{\infty} = \overline{\sigma}(M_{11}(0)).$$
\end{proposition}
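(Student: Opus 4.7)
The plan is to prove the proposition by symmetrizing the state-space realization of $M_{11}(s)$ and then showing that the resulting transfer function attains its peak on the imaginary axis at DC. The matrix $L_{\ms ess}(\mc{F}) = L_e(\mc{F}) R_{\ms (\mc{F},\mc{C})} W R_{\ms (\mc{F},\mc{C})}^T$ is not symmetric in general, but $L_e(\mc{F})$ is symmetric positive definite, so I can apply the similarity via $L_e(\mc{F})^{1/2}$ exactly as in the proof of Proposition \ref{prop:edgelap_sim2}. After the change of coordinates and the cancellation $L_e(\mc{F})^{-1/2} \cdot L_e(\mc{F}) = L_e(\mc{F})^{1/2}$ in the input matrix, the transfer matrix $M_{11}(s)$ admits the equivalent realization
\begin{equation*}
M_{11}(s) = \tilde{B}^T (sI + \tilde{A})^{-1} \tilde{B}, \qquad \tilde{A} = L_e(\mc{F})^{1/2} R_{\ms (\mc{F},\mc{C})} W R_{\ms (\mc{F},\mc{C})}^T L_e(\mc{F})^{1/2}, \qquad \tilde{B} = L_e(\mc{F})^{1/2} R_{\ms (\mc{F},\mc{C})} P,
\end{equation*}
with $\tilde{A}$ symmetric and $C = \tilde{B}^T$. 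Since the proposition is only meaningful when $M_{11}(s)$ is stable, under the working assumption that $L_{\ms ess}(\mc{F}) > 0$ we have $\tilde{A} > 0$ by Sylvester's law of inertia.

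The next step is to pull $\tilde{A}$ outside the resolvent through the factorization $(sI + \tilde{A})^{-1} = \tilde{A}^{-1/2}(s\tilde{A}^{-1} + I)^{-1} \tilde{A}^{-1/2}$, which is legitimate because $\tilde{A}$ is symmetric positive definite. Letting $K = \tilde{A}^{-1/2}\tilde{B}$, this yields
\begin{equation*}
M_{11}(s) = K^T (s\tilde{A}^{-1} + I)^{-1} K.
\end{equation*}
In particular $M_{11}(0) = K^T K$ is symmetric positive semi-definite, so $\overline{\sigma}(M_{11}(0)) = \|K^T K\| = \overline{\sigma}(K)^2$.

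For the frequency-domain bound I will diagonalize $\tilde{A}^{-1} = U \Lambda^{-1} U^T$ with $\Lambda = \mbox{\bf diag}(\lambda_1,\dots,\lambda_n)$ and $\lambda_i > 0$. On the imaginary axis, $(j\omega \tilde{A}^{-1} + I)^{-1} = U\,\mbox{\bf diag}\!\left((1 + j\omega/\lambda_i)^{-1}\right)U^T$, whose spectral norm equals $\max_i (1 + \omega^2/\lambda_i^2)^{-1/2} \le 1$, with equality at $\omega = 0$. Applying sub-multiplicativity of the spectral norm gives
\begin{equation*}
\overline{\sigma}(M_{11}(j\omega)) \le \overline{\sigma}(K)^2 \cdot \|(j\omega \tilde{A}^{-1} + I)^{-1}\| \le \overline{\sigma}(K)^2 = \overline{\sigma}(M_{11}(0))
\end{equation*}
for every $\omega \in \reals$, and the reverse inequality is trivial since $\|M_{11}(s)\|_{\infty}$ is a supremum over all frequencies.

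The only non-routine step is the symmetrization: once the realization $(-\tilde{A}, \tilde{B}, \tilde{B}^T)$ is obtained, the rest is an elementary frequency-domain estimate that exploits the fact that a symmetric positive definite state matrix gives a first-order-roll-off-type bound peaking at DC. I anticipate the main bookkeeping subtlety is tracking that the $L_e(\mc{F})$ factor appearing in the input matrix of the original realization of $M_{11}$ is exactly what produces $C = B^T$ after the similarity, which is what makes $M_{11}(0)$ turn out to equal $K^T K$ rather than merely a product of two different factors.
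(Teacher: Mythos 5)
Your proof is correct. The paper itself offers no argument for this proposition --- it is imported by citation from \cite{Zelazo2009b} --- so your derivation fills a gap rather than duplicating one; the route you take (symmetrize via $L_e(\mc{F})^{1/2}$ to obtain a realization with $\tilde{A}=\tilde{A}^T>0$ and $C=\tilde{B}^T$, then factor out $\tilde{A}^{-1/2}$ and observe that $\|(j\omega\tilde{A}^{-1}+I)^{-1}\|\le 1$ with equality at $\omega=0$) is the standard one for this family of edge-agreement results and is essentially what the cited source does. Two small points worth making explicit: the statement is only meaningful under the nominal-stability hypothesis used elsewhere in Section~\ref{sec:small gain} (your ``working assumption'' $L_{\ms ess}(\mc{F})>0$, which via Theorem~\ref{thm:signature1} is equivalent to $\Rf W\Rf^T>0$ and hence to $\tilde{A}>0$), and the norm computation $\|(j\omega\tilde{A}^{-1}+I)^{-1}\|=\max_i(1+\omega^2/\lambda_i^2)^{-1/2}$ relies on that matrix being normal, which holds because $U$ is real orthogonal --- both are fine as you have written them.
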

This results shows that the $\mc{H}_{\infty}$ performance of the system $M_{11}(s)$ can be obtained by computing the largest singular value of the real matrix $M_{11}(0)$, leading to the result on the robust stability of $\Sigma_{\ms \mc{F}}(\mc{G},\Delta)$.  

\begin{theorem}\label{thm:general_uncertain}
Let ${\bf \Delta} = \{ \Delta \in \reals^{|\mc{E}_{\ms \Delta}| \times |\mc{E}_{\ms \Delta}|} , \; \|\Delta\|\leq \overline \delta, \, \Delta \mbox{ diagonal}\}$ and assume $\Sigma_{\ms \mc{F}}(\mc{G})$ is nominally stable.  Then the uncertain edge agreement protocol is robustly stable for any $\Delta \in {\bf \Delta}$ if 
$\|\Delta\|  < ({\sigma}(M_{11}(0)))^{-1}.$
\end{theorem}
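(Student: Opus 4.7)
The plan is to apply the small-gain theorem directly to the upper fractional transformation (\ref{UFT}). The nominal stability assumption on $\Sigma_{\ms \mc{F}}(\mc{G})$ means that $L_{\ms ess}(\mc{F})$ is Hurwitz (by Proposition \ref{prop:edgelap_sim2} and Theorem \ref{thm:signature1}, the nonzero spectrum of $-L(\mc{G})$ coincides with that of $-L_{\ms ess}(\mc{F})$). Consequently every block $M_{ij}(s)$ is a stable, proper transfer matrix, and in particular $M_{11}(s) \in \mc{H}_\infty$, so its $\mc{H}_\infty$ norm is well-defined. With this in place, well-posedness and internal stability of the closed loop in Figure \ref{fig:uncertain} reduce to the invertibility of $(I - M_{11}(s)\Delta)$ as a stable proper mapping.

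The first step is to invoke the standard small-gain theorem: for any $\Delta \in {\bf \Delta}$, the transfer matrix $(I - M_{11}(s)\Delta)^{-1}$ is stable and proper whenever $\|M_{11}(s)\Delta\|_\infty < 1$. Since $\Delta$ is a constant matrix, submultiplicativity of the $\mc{H}_\infty$ norm yields
\beas
\|M_{11}(s)\Delta\|_\infty \;\leq\; \|M_{11}(s)\|_\infty \cdot \|\Delta\|,
\eeas
so it suffices to verify $\|M_{11}(s)\|_\infty \cdot \|\Delta\| < 1$.

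The second step is to replace the $\mc{H}_\infty$ norm by a purely algebraic quantity via Proposition \ref{prop:tf_norm}, which asserts $\|M_{11}(s)\|_\infty = \overline{\sigma}(M_{11}(0))$. Substituting gives the equivalent sufficient condition
\beas
\overline{\sigma}(M_{11}(0)) \cdot \|\Delta\| \;<\; 1,
\eeas
i.e.\ $\|\Delta\| < (\overline{\sigma}(M_{11}(0)))^{-1}$, which is exactly the statement of the theorem. Once $(I - M_{11}(s)\Delta)^{-1}$ is stable, the upper fractional transformation $\overline{S}(\Sigma_{\ms \mc{F}}(\mc{G}),\Delta) = M_{22} + M_{21}\Delta(I - M_{11}\Delta)^{-1}M_{12}$ is a stable combination of stable blocks, giving robust stability of $\Sigma_{\ms \mc{F}}(\mc{G},\Delta)$.

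The argument is essentially a textbook small-gain application, so the main obstacle is a matter of bookkeeping rather than analysis: one should make sure that Proposition \ref{prop:tf_norm} genuinely applies to $M_{11}(s)$ as written (the positive realness structure on which that proposition rests must be inherited by the specific transfer matrix here), and one should note that the submultiplicative step discards the diagonal structure of $\Delta$. The latter is why the condition is only sufficient in general; tightness for the single-edge case (mentioned in the abstract) will require a separate argument exploiting the rank-one structure of $P$ when $|\mc{E}_{\ms \Delta}|=1$, and is not being claimed here.
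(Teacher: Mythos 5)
Your argument is correct and follows exactly the route the paper intends: the paper gives no separate proof, remarking only that the theorem ``is in fact a direct statement of the small-gain theorem'' applied to the fractional transformation (\ref{UFT}) together with Proposition \ref{prop:tf_norm}, which is precisely the two-step chain (small gain plus $\|M_{11}(s)\|_{\infty}=\overline{\sigma}(M_{11}(0))$) you wrote out. Your closing remarks on the conservatism introduced by discarding the diagonal structure of $\Delta$ match the paper's own discussion following the theorem.
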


Theorem \ref{thm:general_uncertain} is in fact a direct statement of the small-gain theorem and can be considered conservative.  One might wonder if the results of Theorem \ref{thm:negativecuts} might lead to a less conservative result.  That is, if $\mc{E}_{\ms \Delta}$ forms a cut set of $\mc{G}$, is it possible that $\max_{e \in \mc{E}_{\ms \Delta}} \mc{W}(e) < \overline{\sigma}(M_{11}(0))^{-1}$.  The following result shows that this can not be the case.

\begin{proposition}\label{prop:M11bound}
$$ \left(\max_{e \in \mc{E}_{\ms \Delta}} \mc{W}(e)\right)^{-1} \leq \max_{e \in \mc{E}_{\ms \Delta}} \mc{R}_{e}(\mc{G}) \leq \overline{\sigma}(M_{11}(0)) \leq  {\bf R}_{tot}^{\mc{E}_{\ms \Delta}},$$
where 
{
$${\bf R}_{tot}^{\mc{E}_{\ms \Delta}} = \mbox{\bf trace}\left[ P^TE^T(\Etree^L)^T(\R W \R^T)^{-1}\Etree^L EP\right].$$}
\end{proposition}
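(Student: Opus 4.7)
My plan is to first obtain an explicit closed-form expression for $M_{11}(0)$, then recognize its diagonal entries as edge effective resistances, and finally chain the four quantities using elementary spectral inequalities for positive semidefinite matrices. For the first step, substituting $L_{\ms ess}(\mc{F}) = L_e(\mc{F}) R_{\ms (\mc{F},\mc{C})} W R_{\ms (\mc{F},\mc{C})}^T$ from Proposition \ref{prop:edgelap_sim2} into the definition of $M_{11}(s)$ and setting $s=0$ makes the $L_e(\mc{F})$ factor telescope out, yielding
\[
M_{11}(0) = P^T R_{\ms (\mc{F},\mc{C})}^T \bigl( R_{\ms (\mc{F},\mc{C})} W R_{\ms (\mc{F},\mc{C})}^T \bigr)^{-1} R_{\ms (\mc{F},\mc{C})} P.
\]
Since the nominal system is stable, Theorem \ref{thm:signature1} ensures $\Rf W \Rf^T \succ 0$, so $M_{11}(0)$ is symmetric and positive semidefinite and $\overline\sigma(M_{11}(0))$ coincides with its largest eigenvalue.

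Next, I would combine the factorization $E(\mc{G}) = \Eforrest \Rf$ with the pseudoinverse identity $L^\dagger(\mc{G}) = (\Eforrest^L)^T (\Rf W \Rf^T)^{-1} \Eforrest^L$ from Proposition \ref{prop:lap_pseudoinv} to obtain the rewriting
\[
M_{11}(0) = P^T E(\mc{G})^T L^\dagger(\mc{G}) E(\mc{G}) P.
\]
Because the columns of $E(\mc{G})P$ are exactly the incidence vectors $\mathbf{e}_u - \mathbf{e}_v$ of the uncertain edges, Definition \ref{def:effec_resistance} immediately identifies $[M_{11}(0)]_{kk} = \mc{R}_{e_k}(\mc{G})$ for every $e_k \in \mc{E}_{\ms \Delta}$, and summing over $k$ gives $\mathrm{tr}\,M_{11}(0) = {\bf R}_{tot}^{\mc{E}_{\ms \Delta}}$.

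With these two identifications in hand, the three rightmost inequalities follow from classical spectral facts for positive semidefinite matrices. The bound $\max_e \mc{R}_e(\mc{G}) \le \overline\sigma(M_{11}(0))$ is the standard dominance of the largest eigenvalue over any diagonal entry, and $\overline\sigma(M_{11}(0)) \le {\bf R}_{tot}^{\mc{E}_{\ms \Delta}}$ is $\lambda_{\max} \le \mathrm{tr}$ combined with the trace identification above. The leftmost inequality $(\max_e \mc{W}(e))^{-1} \le \max_e \mc{R}_e(\mc{G})$ will be the main obstacle: I would approach it by evaluating the Rayleigh quotient of $M_{11}(0)$ along the coordinate direction supported at the uncertain edge of extremal weight and relating the resulting effective resistance to the reciprocal of that edge weight via the parallel-resistor interpretation that underlies Theorem \ref{thm:one_negedge}. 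The delicacy here is that effective resistance and edge weight have opposite monotonicity in general, so the argument must exploit the placement of $P$ inside the quadratic form rather than the global resistance structure of the network.
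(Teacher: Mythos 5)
Your treatment of the two right-hand inequalities is correct and is essentially the paper's own argument: the closed form $M_{11}(0)=P^T\Rf^T\left(\Rf W \Rf^T\right)^{-1}\Rf P$, the identification $\Rf P=\Etree^L EP$ so that $M_{11}(0)=P^TE^TL^{\dagger}(\mc{G})EP$ via Proposition \ref{prop:lap_pseudoinv}, hence $[M_{11}(0)]_{kk}=\mc{R}_{e_k}(\mc{G})$ and $\mbox{\bf trace}[M_{11}(0)]={\bf R}_{tot}^{\mc{E}_{\ms \Delta}}$, followed by the elementary facts that for a positive semi-definite matrix the largest eigenvalue dominates every diagonal entry and is dominated by the trace. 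The paper states this more tersely, but it is the same route.

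The leftmost inequality is where your proposal stops being a proof: you announce an intended strategy and correctly flag the monotonicity obstruction, but you never close the argument, and the route you sketch cannot close it. Evaluating the Rayleigh quotient at the coordinate of the maximum-weight edge $e^*=(u,v)$ returns $\mc{R}_{uv}(\mc{G})$, and the parallel-resistor identity $\mc{R}_{uv}(\mc{G})=r_{e^*}\tilde{\mc{R}}_{uv}/(r_{e^*}+\tilde{\mc{R}}_{uv})$ bounds this quantity from \emph{above} by $r_{e^*}=(\max_{e}\mc{W}(e))^{-1}$ --- the opposite of what is needed. For what it is worth, the paper's own proof of this step is also problematic: it compares two uncertain edges with $r_1\geq r_2$ and asserts $r_1\tilde{\mc{R}}_{uv}/(r_1+\tilde{\mc{R}}_{uv})\geq r_2\tilde{\mc{R}}_{uv}/(r_1+\tilde{\mc{R}}_{uv})\geq r_2$, but the last inequality requires $\tilde{\mc{R}}_{uv}\geq r_1+\tilde{\mc{R}}_{uv}$, i.e.\ $r_1\leq 0$, which fails for positive resistances. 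Indeed, on the unit-weight complete graph with $\mc{E}_{\ms \Delta}=\mc{E}$ one has $\max_{e}\mc{R}_{e}(\mc{G})=2/|\mc{V}|$ while $(\max_{e}\mc{W}(e))^{-1}=1$, so the leftmost inequality cannot hold in the stated generality. In short: your gap is real, it is the only gap in your argument, and it coincides with the weak point of the paper's own proof.
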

\begin{proof}
Let $E_{\ms \Delta}=EP$ denote the incidence matrix with columns corresponding to the edges in $\mc{E}_{\ms \Delta}$.  Observe that $R_{\ms (\mc{F},\mc{C})}P = \Etree^LE_{\ms \Delta}$.  Therefore, the diagonal entries of $M_{11}(0)$ correspond to the effective resistance between nodes incident to the edges in $\mc{E}_{\ms \Delta}$ (from (\ref{effec_res_rwr})), establishing a lower bound on $\overline{\sigma}(M_{11}(0))$.  By a similar argument, it follows that $\mbox{\bf trace}[M_{11}(0)] = {\bf R}_{tot}^{\mc{E}_{\ms \Delta}}$ leading to the upper bound.  

To establish the lower bound, let $S, S' \subset \mc{V}$ be the set of nodes incident to edges in $\mc{E}_{\ms \Delta}$ (with $S$ the nodes that are the head of each edge, and $S'$ the tail).   For each edge $e \in \mc{E}_{\ms \Delta}$, let $r_e = \mc{W}(e)^{-1} = w_e$ denote its resistance.
Consider now the edges $e_1=(u,v), e_2=(u',v') \in \mc{E}_{\ms \Delta}$, and assume $r_1 \geq r_2$ (equivalently, $w_1 \leq w_2$).  Similarly, let $\tilde{\mc{R}}_{uv}=\mc{R}_{uv}(\mc{G}\setminus e_1)$ denote the effective resistance between $u$ and $v$ after edge $e_1$ is removed.  It now follows that
{
\beas
\mc{R}_{uv}(\mc{G}) &=& \frac{r_1\tilde{\mc{R}}_{uv}}{r_1+\tilde{\mc{R}}_{uv}} \geq \frac{r_2(r_1+\tilde{\mc{R}}_{uv}-r_1)}{r_1+\tilde{\mc{R}}_{uv}} \geq r_2,
\eeas
}
Since the above inequality holds for any pair of edges in $\mc{E}_{\ms \Delta}$, and in particular when $r_2 = (\max_{e\in \mc{E}_{\ms \Delta}} \mc{W}(e))^{-1}$, the lower bound is verified. \hfill \QEDclosed
\end{proof}

By introducing additional assumptions into the system $\Sigma_{\ms \mc{F}}(\mc{G},\Delta)$, the small gain result of Theorem \ref{thm:general_uncertain} can be tightened and endowed with more meaningful physical interpretations.

\begin{theorem}\label{thm:alluncertain}
Consider the uncertain edge agreement protocol $\Sigma_{\ms \mc{F}}(\mc{G},\Delta)$ with $\mc{E}_{\ms \Delta} = \mc{E}$ and the nominal edge weights are all positive and equal, i.e., $W = \alpha I$ with $\alpha > 0$.  Then $\|M_{11}(s)\|_{\infty} = {\alpha}^{-1}$
and $\Sigma_{\ms \mc{F}}(\mc{G},\Delta)$ is robustly stable for all diagonal $\Delta$ satisfying
$\|\Delta \|_{\infty} = \overline{\sigma}(\Delta) < \alpha.$

\end{theorem}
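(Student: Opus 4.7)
The plan is to compute $M_{11}(0)$ explicitly under the given assumptions and show it reduces to a scaled orthogonal projector, then invoke Proposition 5.2 and Theorem 5.3 to finish.

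First I would specialize the formula for $M_{11}(s)$ to the stated assumptions. Since $\mc{E}_{\ms \Delta}=\mc{E}$, the selector matrix is $P=I$. Since the nominal weights satisfy $W=\alpha I$, the essential edge Laplacian simplifies to $L_{\ms ess}(\mc{F})= \alpha\, L_e(\mc{F})\,R_{\ms (\mc{F},\mc{C})}R_{\ms (\mc{F},\mc{C})}^T$. Evaluating at $s=0$ and using $(AB)^{-1}=B^{-1}A^{-1}$, the inner factor $L_{\ms ess}(\mc{F})^{-1}L_e(\mc{F})$ collapses cleanly so that
\begin{equation*}
M_{11}(0) \;=\; \alpha^{-1}\, R_{\ms (\mc{F},\mc{C})}^T\!\left(R_{\ms (\mc{F},\mc{C})}R_{\ms (\mc{F},\mc{C})}^T\right)^{-1} R_{\ms (\mc{F},\mc{C})}.
\end{equation*}
Because $R_{\ms (\mc{F},\mc{C})}=\bigl[\,I\ \ \Tcyclef\,\bigr]$ has full row rank, the factor multiplying $\alpha^{-1}$ is precisely the orthogonal projector onto the row space of $R_{\ms (\mc{F},\mc{C})}$ (i.e.\ the cut space of $\mc{G}$). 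An orthogonal projector is symmetric and idempotent with all nonzero eigenvalues equal to $1$, so $\overline{\sigma}(M_{11}(0))=\alpha^{-1}$.

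Second, Proposition 5.2 (cited from \cite{Zelazo2009b}) tells us that $\|M_{11}(s)\|_{\infty} = \overline{\sigma}(M_{11}(0))$; combining this with the computation above yields the claimed identity $\|M_{11}(s)\|_{\infty}=\alpha^{-1}$.

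Finally, applying Theorem 5.3 (the small-gain statement) with this exact value of $\|M_{11}\|_\infty$ gives robust stability whenever $\|\Delta\|\,\|M_{11}(s)\|_\infty<1$, i.e.\ whenever $\overline{\sigma}(\Delta)<\alpha$, which is the desired conclusion. I expect essentially no obstacle here beyond keeping the order of matrix factors straight when inverting $L_{\ms ess}(\mc{F})$; the key conceptual point, and the only one requiring care, is recognizing that $R_{\ms (\mc{F},\mc{C})}^T(R_{\ms (\mc{F},\mc{C})}R_{\ms (\mc{F},\mc{C})}^T)^{-1}R_{\ms (\mc{F},\mc{C})}$ is an orthogonal projector so that its spectral norm is exactly $1$ (not merely bounded by a quantity depending on the graph). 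This is what makes the small-gain bound sharp in this special case.
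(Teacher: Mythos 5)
Your proposal is correct and follows essentially the same route as the paper: specialize to $P=I$ and $W=\alpha I$, observe that $M_{11}(0)=\alpha^{-1}R_{\ms (\mc{F},\mc{C})}^T(R_{\ms (\mc{F},\mc{C})}R_{\ms (\mc{F},\mc{C})}^T)^{-1}R_{\ms (\mc{F},\mc{C})}$ is a scaled orthogonal projector with spectral norm $\alpha^{-1}$, then combine Proposition \ref{prop:tf_norm} with the small-gain theorem. The only difference is that you spell out the cancellation of $L_e(\mc{F})$ inside $L_{\ms ess}(\mc{F})^{-1}L_e(\mc{F})$, which the paper leaves implicit.
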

\begin{proof}
With $\mc{E}_{\ms \Delta} = \mc{E}$ it follows that $P = I$.  Therefore, 
$M_{11}(0) = \frac{1}{\alpha} R_{\ms (\mc{F},\mc{C})}^T(R_{\ms (\mc{F},\mc{C})}R_{\ms (\mc{F},\mc{C})}^T)^{-1}R_{\ms (\mc{F},\mc{C})}$
is a projection matrix scaled by $\alpha^{-1}$, that is $\|M_{11}(s)\|_{\infty}=\overline{\sigma}(M_{11}(0))=\alpha^{-1}$. The bound on the uncertainty $\Delta$ then follows immediately from the small-gain theorem. \hfill \QEDclosed
\end{proof}

While still a conservative result, Theorem \ref{thm:alluncertain} provides has a more intuitive interpretation.  It states that all edge weights must remain positive even in the presence of uncertainty.  This result can also be understood in the context of Theorem \ref{thm:negativecuts} since one can imagine the uncertainty attacking a set of edges that forms a cut in the graph.  Thus, if the uncertainty is such that the edges in the cut have negative weight (i.e., $w_k + \delta_k < 0$) then the resulting edge agreement protocol will be unstable.

The small gain result can in fact lead to an exact condition for the robust stability of $\Sigma_{\ms \mc{F}}(\mc{G},\Delta)$.  The next result considers the uncertain edge agreement problem when the uncertainty is present in only a single edge in the graph.

\begin{theorem}\label{thm:oneuncertain}
Consider the uncertain edge agreement protocol $\Sigma_{\ms \mc{F}}(\mc{G},\Delta)$ with $\mc{E}_{\ms \Delta} = \{\{u,v\}\}$ (i.e., $|\mc{E}_{\ms \Delta}| = 1$) and assume $\Sigma(\mc{G})$ is nominally stable.  
Then $\|M_{11}(s) \|_{\infty} = \mc{R}_{uv}(\mc{G})$ and $\Sigma(\mc{G},\Delta)$ is robustly stable for all $\delta$ satisfying
$\|\Delta \|_{\infty}  < \mc{R}_{uv}^{-1}(\mc{G}).$
\end{theorem}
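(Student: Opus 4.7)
The plan is to reduce everything to two already-established pillars: the evaluation formula for the $\mc{H}_\infty$-norm of $M_{11}(s)$ given in Proposition \ref{prop:tf_norm}, and the pseudoinverse identity of Proposition \ref{prop:lap_pseudoinv} which pins down $M_{11}(0)$ as an effective resistance. Once these are in hand, the small-gain theorem gives the sufficient direction, and Theorem \ref{thm:one_negedge} gives the necessary (exactness) direction.

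\textbf{Step 1: Evaluation of $\|M_{11}\|_\infty$.} Because $|\mc{E}_{\ms \Delta}|=1$, the matrix $P$ is a single column and $M_{11}(0)$ is a scalar. Using $L_{\ms ess}(\mc{F})=L_e(\mc{F})R_{\ms (\mc{F},\mc{C})}WR_{\ms (\mc{F},\mc{C})}^T$ I will cancel the leftmost $L_e(\mc{F})$ against $L_{\ms ess}(\mc{F})^{-1}$, obtaining
\[
M_{11}(0)=P^TR_{\ms (\mc{F},\mc{C})}^T\bigl(R_{\ms (\mc{F},\mc{C})}WR_{\ms (\mc{F},\mc{C})}^T\bigr)^{-1}R_{\ms (\mc{F},\mc{C})}P .
\]
Then, as in the proof of Proposition \ref{prop:M11bound}, I rewrite $R_{\ms (\mc{F},\mc{C})}P=\Eforrest^L E_{\ms \Delta}$, where $E_{\ms \Delta}={\bf e}_u-{\bf e}_v$ is the single column of the incidence matrix restricted to the uncertain edge. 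Proposition \ref{prop:lap_pseudoinv} then yields $M_{11}(0)=({\bf e}_u-{\bf e}_v)^TL^\dagger(\mc{G})({\bf e}_u-{\bf e}_v)=\mc{R}_{uv}(\mc{G})$ by Definition \ref{def:effec_resistance}. Proposition \ref{prop:tf_norm} immediately promotes this scalar value to the claim $\|M_{11}(s)\|_\infty=\mc{R}_{uv}(\mc{G})$.

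\textbf{Step 2: Sufficient direction via small gain.} With $M_{11}(s)$ a scalar transfer function of $\mc{H}_\infty$-norm $\mc{R}_{uv}(\mc{G})$ and $\Delta=\delta\in\reals$, the small-gain theorem (applied exactly as in Theorem \ref{thm:general_uncertain}) guarantees that the upper LFT $\overline{S}(\Sigma_{\ms \mc{F}}(\mc{G}),\Delta)$ is internally stable whenever $|\delta|\,\mc{R}_{uv}(\mc{G})<1$, which is the asserted bound.

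\textbf{Step 3: Exactness.} This is the step I expect to require the most care. I would argue that at the boundary $|\delta|=\mc{R}_{uv}^{-1}(\mc{G})$ there is a destabilizing choice, namely $\delta^\star=-\mc{R}_{uv}^{-1}(\mc{G})$. The perturbed system matrix is congruent (via $L_e(\mc{F})^{1/2}$, as used in the proof of Theorem \ref{thm:signature1}) to $R_{\ms (\mc{F},\mc{C})}(W+P\delta P^T)R_{\ms (\mc{F},\mc{C})}^T$, so by Theorem \ref{thm:signature1} stability of $\Sigma_{\ms \mc{F}}(\mc{G},\delta)$ is equivalent to positive semidefiniteness of the weighted Laplacian of the perturbed graph $\mc{G}_\delta$. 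I will invoke Theorem \ref{thm:one_negedge} on $\mc{G}_\delta$, writing its positive part $\mc{G}_+'$ as $\mc{G}$ with the edge $(u,v)$ deleted. The parallel-resistor identity
\[
\mc{R}_{uv}^{-1}(\mc{G})=\mc{R}_{uv}^{-1}(\mc{G}_+')+w_{uv}
\]
then converts the threshold $|w_{uv}+\delta|\le\mc{R}_{uv}^{-1}(\mc{G}_+')$ of Theorem \ref{thm:one_negedge} into the equivalent statement $|\delta|\le\mc{R}_{uv}^{-1}(\mc{G})$ (for the relevant sign $w_{uv}+\delta<0$), so that the perturbed Laplacian becomes singular exactly at $\delta=-\mc{R}_{uv}^{-1}(\mc{G})$ and indefinite beyond. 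The main subtlety is being careful with signs and the case split $w_{uv}+\delta\gtrless 0$, since for positive perturbed weights the Laplacian is automatically PSD and no instability arises from that side.
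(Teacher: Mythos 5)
Your proposal is correct and follows essentially the same route as the paper, whose proof is a one-line citation of exactly the three ingredients you develop: Proposition \ref{prop:tf_norm} to reduce $\|M_{11}\|_\infty$ to $M_{11}(0)$, Proposition \ref{prop:lap_pseudoinv} (via $R_{\ms (\mc{F},\mc{C})}P=\Eforrest^L E_{\ms \Delta}$) to identify $M_{11}(0)=\mc{R}_{uv}(\mc{G})$, and Theorem \ref{thm:one_negedge} together with the parallel-conductance identity for the exactness of the small-gain bound. Your Step 3 is a legitimate and somewhat more explicit unpacking of what the paper leaves implicit, but it is not a different argument.
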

\begin{proof}
This statement directly follows Theorem \ref{thm:one_negedge}, Proposition \ref{prop:tf_norm}, and the effective resistance between two nodes in a graph (\ref{lap_pseudoinv}).  \hfill \QEDclosed
\end{proof}

Theorem \ref{thm:oneuncertain} provides an exact condition for the robust stability of $\Sigma_{\ms \mc{F}}(\mc{G},\Delta)$ when a single edge has uncertainty.   That is, there is no conservatism in this result, which is a direct consequence of Theorem \ref{thm:one_negedge}.  This result also demonstrates that the effective resistance between nodes in the network can serve the role of a \emph{gain margin} for the system.

Consider now a scenario where an attacker is able to perturb the weight of any single edge in the graph.  From Theorem \ref{thm:oneuncertain}, each edge can tolerate a perturbation determined by the effective resistance between the two incident nodes.  Thus, an attacker might consider choosing the edge with the `smallest' margin.  This leads to a less restrictive uncertainty class than in Theorem \ref{thm:oneuncertain}, however, the result will consequently be more conservative.

\begin{corollary}\label{cor:smallest}
Let ${\bf \Delta} = \{ \delta P_iP_i^T, \, i=1,\ldots , |\mc{E}|, \, |\delta| \leq \overline{\delta} \}$ be the set of allowable perturbations on the edge weights, where $P_i \in \reals^{|\mc{E}|}$ satisfies $[P_i]_j=1$ when $j=i$ and 0 otherwise, and assume that $\Sigma_{\ms \mc{F}}(\mc{G})$ is nominally stable.  Then $\Sigma(\mc{G},\Delta)$ is robustly stable if for any $\Delta \in {\bf \Delta}$ one has
$\|\Delta \|_{\infty}   < \min_{uv} \mc{R}_{uv}^{-1}(\mc{G}).$
\end{corollary}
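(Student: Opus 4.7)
The plan is to reduce Corollary \ref{cor:smallest} to a straightforward application of Theorem \ref{thm:oneuncertain}, exploiting the fact that the uncertainty set $\mathbf{\Delta}$ here is a union of single-edge perturbation sets indexed by the edge the attacker chooses. The key observation is that any $\Delta \in \mathbf{\Delta}$ has the form $\delta P_i P_i^T$ for some fixed $i \in \{1,\ldots,|\mathcal{E}|\}$, which corresponds precisely to perturbing the weight of the single edge $e_i = (u,v)$ while leaving all others at their nominal values. Thus, for a fixed $i$, the closed-loop map under $\Delta = \delta P_i P_i^T$ coincides with the closed-loop map of $\Sigma_{\ms \mathcal{F}}(\mathcal{G},\Delta)$ restricted to the uncertainty class $\mathcal{E}_{\ms \Delta} = \{e_i\}$ considered in Theorem \ref{thm:oneuncertain}.

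Given that correspondence, I would proceed as follows. First, for each edge $e_i = (u,v) \in \mathcal{E}$, apply Theorem \ref{thm:oneuncertain} to conclude that the uncertain protocol is robustly stable under single-edge perturbations of $e_i$ whenever $|\delta| < \mathcal{R}_{uv}^{-1}(\mathcal{G})$. Second, because the attacker is free to select any edge index $i$, robust stability against the entire set $\mathbf{\Delta}$ requires a bound $\overline\delta$ that is simultaneously admissible for every edge. Taking the intersection of the $|\mathcal{E}|$ individual stability margins gives
\begin{equation*}
\overline\delta \;<\; \min_{i \in \{1,\ldots,|\mathcal{E}|\}} \mathcal{R}_{u_iv_i}^{-1}(\mathcal{G}) \;=\; \min_{uv} \mathcal{R}_{uv}^{-1}(\mathcal{G}),
\end{equation*}
which is precisely the claimed bound. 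The nominal stability hypothesis on $\Sigma_{\ms \mathcal{F}}(\mathcal{G})$ is inherited by each single-edge sub-problem, so Theorem \ref{thm:oneuncertain} applies in each case without modification.

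There is really no serious obstacle here beyond carefully noting the set-theoretic structure of $\mathbf{\Delta}$. The only subtlety worth flagging is that the resulting bound is conservative relative to Theorem \ref{thm:oneuncertain} applied to a known single edge: conservatism arises not from the small-gain argument itself (which was shown to be exact edge-by-edge) but from the fact that the defender must guard against the attacker's worst choice of edge, equivalently the edge maximizing $\mathcal{R}_{uv}(\mathcal{G})$. Accordingly, the statement should be interpreted as a worst-case robustness margin over all single-edge attacks, and it is tight in the sense that for the specific edge attaining the minimum, Theorem \ref{thm:oneuncertain} already shows no larger bound is possible.
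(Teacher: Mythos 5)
Your proof is correct and follows exactly the reasoning the paper intends: the corollary is obtained by applying Theorem \ref{thm:oneuncertain} to each single-edge perturbation $\delta P_iP_i^T$ separately and then taking the worst case over all edges, i.e., the minimum of $\mc{R}_{uv}^{-1}(\mc{G})$, which is precisely the argument sketched in the paragraph preceding the corollary. Your remark on where the conservatism enters (the attacker's choice of edge, not the small-gain bound itself) also matches the paper's discussion.
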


Corollary \ref{cor:smallest} demonstrates the earlier discussion.  If only a single edge can be ``attacked," then the robustness margin of the entire network will be determined by the edge connecting the nodes with the largest effective resistance between them.  It should be noted that this value will depend on both the nominal values of the edge weights along with the location of the edges in the graph.

Using the result of Theorem \ref{thm:multiple_edge} also leads to a less conservative corollary for Theorem \ref{thm:general_uncertain}.

\begin{corollary}
Let $\mc{E}_{\ms \Delta}$ denote the set of uncertain edges and define the uncertainty set as ${\bf \Delta} = \{ \Delta \in \reals^{|\mc{E}_{\ms \Delta}| \times |\mc{E}_{\ms \Delta}|} , \; \|\Delta\|\leq \overline \delta, \, \Delta \mbox{ diagonal}\}$.  For each edge $k = (u,v) \in \mc{E}_{\ms \Delta}$, define $\mc{P}_k \subseteq \mc{E}$ as the set of all edges in $\mc{G}$ that belong to a path connecting nodes $u$ to $v$ 
 and assume that $\mc{P}_i \cap \mc{P}_j = \emptyset$ for all $i,j \in \mc{E}_{\ms \Delta}$ with $\mc{\bf R} = \mbox{\bf diag}\{\mc{R}_1(\mc{G}),\ldots,\mc{R}_{\ms |\mc{E}_{\ms \Delta}|}(\mc{G})\}$.  Assume that $\Sigma_{\ms \mc{F}}(\mc{G})$ is nominally stable.  Then $\Sigma(\mc{G},\Delta)$ is robustly stable for all $\Delta \in {\bf \Delta}$ if
$ \|\Delta \|< \min_{e \in \mc{E}_{\ms \Delta}} \mc{R}_e(\mc{G})^{-1}.$ 
\end{corollary}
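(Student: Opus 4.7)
The plan is to apply the small-gain theorem exactly as in Theorem \ref{thm:general_uncertain}, and then show that under the disjoint-paths hypothesis the relevant singular value $\overline{\sigma}(M_{11}(0))$ collapses to $\max_{e \in \mc{E}_{\ms \Delta}} \mc{R}_e(\mc{G})$. The conclusion then follows because the small-gain bound becomes $\|\Delta\| < 1/\max_e \mc{R}_e(\mc{G}) = \min_e \mc{R}_e(\mc{G})^{-1}$.

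First, I would simplify $M_{11}(0)$ in closed form. Using $L_{\ms ess}(\mc{F}) = L_e(\mc{F}) R_{\ms (\mc{F},\mc{C})} W R_{\ms (\mc{F},\mc{C})}^T$ and the identity $R_{\ms (\mc{F},\mc{C})}P = \Eforrest^L E_{\ms \Delta}$ (where $E_{\ms \Delta}=EP$), direct substitution gives
\[
M_{11}(0) \;=\; E_{\ms \Delta}^T (\Eforrest^L)^T \bigl(R_{\ms (\mc{F},\mc{C})} W R_{\ms (\mc{F},\mc{C})}^T\bigr)^{-1} \Eforrest^L E_{\ms \Delta}.
\]
By Proposition \ref{prop:lap_pseudoinv}, for any pair of edges $i=(u,v)$ and $j=(u',v')$ in $\mc{E}_{\ms \Delta}$, the $(i,j)$ entry of $M_{11}(0)$ equals $({\bf e}_u - {\bf e}_v)^T L^{\dagger}(\mc{G}) ({\bf e}_{u'} - {\bf e}_{v'})$. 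In particular, the diagonal entries recover $\mc{R}_k(\mc{G})$ by Definition \ref{def:effec_resistance}.

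Second, I would show the off-diagonal entries vanish under the assumption $\mc{P}_i \cap \mc{P}_j = \emptyset$ for $i\neq j$. This is exactly the structural argument given in the proof of Theorem \ref{thm:multiple_edge}: the column of $\Eforrest^L E_{\ms \Delta}$ corresponding to edge $k$ is supported only on forrest edges lying in $\mc{G}(\mc{P}_k)$, and the disjointness of the $\mc{P}_k$'s together with the block structure of $(R_{\ms (\mc{F},\mc{C})} W R_{\ms (\mc{F},\mc{C})}^T)^{-1}$ inherited from the disjoint subgraphs $\mc{G}(\mc{P}_k)$ forces distinct columns to interact trivially. Hence $M_{11}(0) = \mc{\bf R}$ is diagonal, so $\overline{\sigma}(M_{11}(0)) = \max_{e\in\mc{E}_{\ms \Delta}} \mc{R}_e(\mc{G})$.

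Third, I would combine Proposition \ref{prop:tf_norm} with Theorem \ref{thm:general_uncertain}: nominal stability of $\Sigma_{\ms \mc{F}}(\mc{G})$ together with $\|\Delta\| < 1/\overline{\sigma}(M_{11}(0)) = \min_e \mc{R}_e(\mc{G})^{-1}$ guarantees robust stability via the small-gain theorem. The main obstacle is the block-decoupling step that yields the diagonal structure of $M_{11}(0)$; I would handle it by invoking verbatim the decomposition already established in the proof of Theorem \ref{thm:multiple_edge}, since the disjoint-paths hypothesis here plays precisely the same role as it does there (only with the ambient graph $\mc{G}$ replacing $\mc{G}_+$). All other steps are algebraic manipulations on the already-derived expressions for $M_{11}(s)$, $L_{\ms ess}(\mc{F})$, and $L^{\dagger}(\mc{G})$.
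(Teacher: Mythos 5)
Your proposal is correct and follows exactly the route the paper intends: the corollary is stated as the combination of Theorem \ref{thm:multiple_edge}'s block-decoupling argument (which renders $M_{11}(0)=E_{\ms \Delta}^T(\Etree^L)^T(\R W\R^T)^{-1}\Etree^L E_{\ms \Delta}$ diagonal with the effective resistances $\mc{R}_e(\mc{G})$ on the diagonal) with Proposition \ref{prop:tf_norm} and the small-gain condition of Theorem \ref{thm:general_uncertain}, yielding $\|\Delta\|<1/\overline{\sigma}(M_{11}(0))=\min_{e\in\mc{E}_{\ms \Delta}}\mc{R}_e(\mc{G})^{-1}$. Your filled-in details, including the closed-form reduction of $M_{11}(0)$ via $L_{\ms ess}(\mc{F})^{-1}=(\R W\R^T)^{-1}L_e(\mc{F})^{-1}$ and $R_{\ms (\mc{F},\mc{C})}P=\Eforrest^L E_{\ms \Delta}$, are consistent with the computations already carried out in Proposition \ref{prop:M11bound}.
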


\subsection{Robust Stability of $\Sigma_{\ms \mc{F}}(\mc{G},\Phi)$}

We now examine the robust stability of the consensus system with nonlinear couplings $\Sigma_{\ms \mc{F}}(\mc{G},{\Phi})$ given in (\ref{nonlinear_edge_agreement}).  First, we recall results on the multivariable circle criterion \cite{Haddad1993}.  

Let $K_1, K_2 \in \reals^{|\mc{E}_{\ms \Delta}| \times |\mc{E}_{\ms \Delta}|}$ be real diagonal matrices with $K_2 - K_1 > 0$. Define ${\bf \Phi}$ to be the set of all sector-bounded time-varying nonlinearities as described in Section \ref{subsec:uncertain_edge} with $K_1 = \mbox{\bf diag}\{\alpha_1, \ldots, \alpha_{\ms |\mc{E}_{\ms \Delta}|} \}$ and $K_2 = \mbox{\bf diag}\{\beta_1, \ldots, \beta_{\ms |\mc{E}_{\ms \Delta}|} \}$.  In the following, we denote by $G(s)$ the transfer-function of a linear system with state-space matrices $(A,B,C,D)$.

\begin{lemma}[\cite{Haddad1993}]\label{lem:SPR}
The following statements are equivalent:
\begin{enumerate}
	\item[(i)] The state matrix $A$ is asymptotically stable and $G(s)$ is strictly positive real;
	\item[(ii)] $D+D^T > 0$ and there exists $X,Y>0$ such that 
	{
	$$0=A^TX+XA+(B^TX-C)^T(D+D^T)^{-1}(B^TX-C)+Y.$$
	}
\end{enumerate}
\end{lemma}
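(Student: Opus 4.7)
The plan is to recognize this as the Kalman--Yakubovich--Popov (KYP) lemma specialized to strictly positive real systems, with the classical triple of KYP matrix equations consolidated into a single algebraic Riccati equation (ARE) by elimination of the auxiliary factor matrices --- an elimination that is possible precisely because $D+D^T > 0$.

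For the direction $(ii) \Rightarrow (i)$, I would take $V(x) = x^T X x$ simultaneously as a Lyapunov function and as a storage function. Along the unforced dynamics $\dot x = Ax$, the ARE gives $\dot V = -x^T(B^TX-C)^T(D+D^T)^{-1}(B^TX-C)x - x^T Y x \le -x^T Y x$, so asymptotic stability of $A$ follows immediately from $Y>0$. For strict positive realness, the key step is a completion of squares: writing $R := D+D^T$ and $y = Cx + Du$, substituting the ARE into $2u^T y - \dot V$ produces
\[
2u^T y - \dot V \;=\; \bigl\| R^{1/2} u - R^{-1/2}(B^T X - C) x \bigr\|^2 + x^T Y x .
\]
The residual $x^T Y x$ with $Y>0$ upgrades the dissipation inequality to a strict one, and, passing to the frequency domain via steady-state harmonic inputs and Parseval's identity, yields $G(j\omega) + G(j\omega)^* > 0$ for all $\omega$, including at infinity (where the condition reduces to $D+D^T > 0$). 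This is precisely strict positive realness.

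For $(i) \Rightarrow (ii)$, the behavior at $\omega \to \infty$ forces $D+D^T>0$ directly. The substantive content is constructing $X>0$, which I would approach through spectral factorization: since $\Pi(s) := G(s) + G(-s)^T$ is positive definite on the imaginary axis and equals $R$ at infinity, it admits a stable minimum-phase factorization $\Pi(s) = M(-s)^T R M(s)$, and this factorization corresponds bijectively to a stabilizing symmetric solution of the homogeneous Riccati equation $A^TX + XA + (B^TX-C)^T R^{-1}(B^TX-C) = 0$. To upgrade this to the inhomogeneous version with $Y>0$, I would exploit the strict inequality in SPR by choosing $\epsilon>0$ small enough that $G(s) - \epsilon I$ remains positive real; the perturbed Riccati solution then produces a positive-definite residual of order $\epsilon$ that plays the role of $Y$. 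Positive definiteness of $X$ itself follows from an observability Gramian interpretation after reducing to a minimal realization without loss of generality.

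The main obstacle is the construction of $X$ in $(i) \Rightarrow (ii)$: mere existence of an ARE solution is classical Riccati theory, but producing $X>0$ and $Y>0$ \emph{simultaneously} is what distinguishes strict positive realness from ordinary positive realness, and it requires careful accounting of the strictness margin inherited from the SPR hypothesis. Once that margin is tracked through the spectral factorization, both implications close cleanly, with the ARE serving as the algebraic bridge between the frequency-domain SPR condition and the time-domain Lyapunov/storage formulation.
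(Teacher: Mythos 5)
This lemma is imported verbatim from \cite{Haddad1993}; the paper gives no proof of its own, so there is nothing internal to compare your argument against. Judged on its own terms, your sketch follows the standard Kalman--Yakubovich--Popov route, and the $(ii)\Rightarrow(i)$ half is essentially complete: the Lyapunov argument from the Riccati equation is correct, and your completion-of-squares identity
$2u^Ty-\dot V=\bigl\|R^{1/2}u-R^{-1/2}(B^TX-C)x\bigr\|^2+x^TYx$
does check out against the ARE, giving strict dissipativity and hence $G(j\omega)+G(j\omega)^*>0$ for all $\omega$ (with the case $x=0$ covered by $u^TRu>0$, i.e.\ by $D+D^T>0$). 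The one caveat there is definitional: depending on whether ``strictly positive real'' means pointwise strict positivity on the extended imaginary axis or the stronger condition that $G(s-\epsilon)$ is positive real for some $\epsilon>0$, you need an extra uniformity step extracted from $Y>0$; you should say which definition you are targeting, since \cite{Haddad1993} uses the stronger one.

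The genuine gap is in $(i)\Rightarrow(ii)$. First, the existence of the spectral factorization of $\Pi(s)=G(s)+G^T(-s)$ and its correspondence with a stabilizing Riccati solution is asserted rather than argued, and that correspondence requires a controllability (or stabilizability) hypothesis on $(A,B)$ that the statement as written does not carry. Second, your claim that $X>0$ follows ``after reducing to a minimal realization without loss of generality'' is not actually without loss of generality: the lemma demands a positive definite $X$ on the entire state space of the given realization, and for a non-minimal $(A,B,C,D)$ the unreachable or unobservable subspace is unconstrained by the transfer function, so SPR of $G(s)$ alone cannot force $X>0$ there. The cited result implicitly assumes minimality (and the paper only ever applies the lemma to a realization it verifies to be minimal), so your proof should state that hypothesis explicitly rather than absorb it into a WLOG. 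With minimality assumed, the observability-Gramian argument for $X>0$ and the $\epsilon$-perturbation producing $Y>0$ both go through, but as written the strictness margin is invoked rather than tracked, which is precisely the part you correctly identify as the crux.
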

\begin{theorem}[\cite{Haddad1993}]\label{thm:sectorstable}
If $(I+K_2G(s))(I+K_1G(s))^{-1}$ is strictly positive real, then for all $\Phi \in {\bf \Phi}$ the negative feedback interconnection of $G(s)$ and $\Phi$ is asymptotically stable.
\end{theorem}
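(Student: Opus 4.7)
The plan is to establish this multivariable circle criterion by combining a loop transformation with Lemma \ref{lem:SPR}. First I would reduce the sector $[K_1,K_2]$ to $[0,K_2-K_1]$ by defining the shifted nonlinearity $\Psi(y) := \Phi(y) - K_1 y$. Diagonality of $K_1$, $K_2$, and $\Phi$ together with $\alpha_i y_i^2 \le y_i\phi_i(y_i) \le \beta_i y_i^2$ implies componentwise that $0 \le y_i\psi_i(y_i) \le (\beta_i - \alpha_i) y_i^2$, i.e.\ $\Psi^T\bigl((K_2-K_1)y - \Psi\bigr) \ge 0$ for every $y$, so $\Psi$ lies in the sector $[0,K_2-K_1]$. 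Absorbing $-K_1$ into the forward path, the negative feedback interconnection of $G(s)$ and $\Phi$ becomes the negative feedback interconnection of $H(s) := G(s)(I + K_1 G(s))^{-1}$ with $\Psi$, and a direct algebraic manipulation gives
\[
(I + K_2 G(s))(I + K_1 G(s))^{-1} = I + (K_2 - K_1) H(s),
\]
so the hypothesis is exactly that $I + (K_2 - K_1) H(s)$ is strictly positive real.

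Next I would apply Lemma \ref{lem:SPR} to a minimal state-space realization $(A_H, B_H, C_H, D_H)$ of $I + (K_2 - K_1) H(s)$. The SPR hypothesis yields that $A_H$ is Hurwitz, that $D_H + D_H^T > 0$, and that there exist $X, Y > 0$ satisfying the algebraic Riccati equation stated in the lemma. I would then use $V(x) := x^T X x$ as a Lyapunov function for the closed loop $\dot x = A_H x - B_H \Psi(C_H x)$. Differentiating $V$ along trajectories, substituting the Riccati equation to complete the square in the quantity $B_H^T X x - C_H^T \Psi$, and invoking the sector inequality on $\Psi$ to cancel the indefinite cross-term, should yield $\dot V \le -x^T Y x$, which is strictly negative for $x \ne 0$ since $Y > 0$, giving global asymptotic stability.

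The main obstacle I anticipate is the bookkeeping around the direct feedthrough of $G(s)$: well-posedness of the loop-shifted system requires invertibility of $I + K_1 D_G$, and one must track how the feedthrough propagates through the identity $(I + K_2 G)(I + K_1 G)^{-1} = I + (K_2 - K_1) H$ to obtain a realization to which Lemma \ref{lem:SPR} applies cleanly; the SPR hypothesis essentially guarantees invertibility automatically, but the algebra must be carried out with care. A secondary point, since $\Phi$ is allowed to be time-varying, is that the final step must rely on the strict inequality $\dot V \le -x^T Y x$ directly rather than on a LaSalle-type invariance argument, which is why $Y > 0$ (and not merely $Y \ge 0$) in Lemma \ref{lem:SPR} is essential.
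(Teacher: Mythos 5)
The paper does not actually prove this theorem: it is imported verbatim from \cite{Haddad1993} (Haddad--Bernstein's multivariable circle criterion) and used as a black box, so there is no in-paper argument to compare yours against. Your reconstruction is the standard proof and, as far as I can tell, it is the same one the cited source gives: loop-shift the sector to $[0,K_2-K_1]$, observe that $(I+K_2G)(I+K_1G)^{-1}=I+(K_2-K_1)G(I+K_1G)^{-1}$, and then run a quadratic Lyapunov argument off the Riccati equation supplied by Lemma~\ref{lem:SPR} --- indeed, Lemma~\ref{lem:SPR} is stated in the paper precisely so that this argument can be carried out. Your outline is correct, and the one step you leave as ``should yield'' does go through: writing $\tilde C$, $\tilde D$ for the output matrices of the realization of $I+(K_2-K_1)H(s)$, the sector condition on $\Psi$ evaluated at the output $y=C_Hx+D_Hu$ of $H$ rearranges exactly to $2\Psi^T\tilde Cx-\Psi^T(\tilde D+\tilde D^T)\Psi\ge 0$, and adding this nonnegative quantity to $\dot V$ and substituting the Riccati equation collapses the cross terms into $-\bigl(R^{-1/2}(B^TX-\tilde C)x+R^{1/2}\Psi\bigr)^T\bigl(R^{-1/2}(B^TX-\tilde C)x+R^{1/2}\Psi\bigr)$ with $R=\tilde D+\tilde D^T>0$, leaving $\dot V\le -x^TYx$. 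Your closing remarks on well-posedness and on needing $Y>0$ (rather than LaSalle) for time-varying $\Phi$ are both apt; the only imprecision is writing the closed loop as $\dot x=A_Hx-B_H\Psi(C_Hx)$, which ignores the feedthrough you yourself flag --- the argument of $\Psi$ is the output of $H$, not of $I+(K_2-K_1)H$, and is defined implicitly when $D_H\neq 0$.
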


We are now prepared to make a general statement on the robust stability of $\Sigma_{\ms \mc{F}}(\mc{G},\Phi)$.
\begin{theorem}
Consider the nonlinear edge agreement protocol $\Sigma_{\ms \mc{F}}(\mc{G},\Phi)$ with $\mc{E}_{\ms \Delta} \subseteq \mc{E}$ and assume $\Sigma(\mc{G})$ is nominally stable.  Then $\Sigma_{\ms \mc{F}}(\mc{G},\Phi)$ is asymptotically stable for any $\Phi \in {\bf \Phi}$ if 
$$\|K_1\| = \max_i |\alpha_i| <\frac{1}{ \|M_{11}(s)\|_{\infty}},$$
and
{ $2W + P\left(K - (1-\sqrt{2})I  \right)\left(K -(1+\sqrt{2}) I  \right)P^T > 0,$}
where $K = K_2-K_1$.
\end{theorem}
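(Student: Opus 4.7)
The plan is to reduce the stability question to the multivariable circle criterion of Theorem \ref{thm:sectorstable}, applied to the feedback interconnection between the nominal linear loop $G(s)=M_{11}(s)$ and the sector-$[K_1,K_2]$ nonlinearity $\Phi$. It then suffices to show that $H(s)=(I+K_2 M_{11}(s))(I+K_1 M_{11}(s))^{-1}=I+KG_1(s)$, with $G_1(s)=M_{11}(s)(I+K_1 M_{11}(s))^{-1}$, is strictly positive real, and to invoke Lemma \ref{lem:SPR} for the KYP characterization. The two hypotheses of the theorem correspond, respectively, to well-posedness of the loop transformation and to feasibility of the resulting KYP Riccati.

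For the well-posedness, Proposition \ref{prop:tf_norm} gives $\|M_{11}\|_{\infty}=\overline{\sigma}(M_{11}(0))$, so condition (1) is precisely the small-gain bound $\|K_1\|\,\|M_{11}\|_{\infty}<1$, which ensures $(I+K_1 M_{11}(s))^{-1}$ is a stable proper transfer matrix. In state space, $H(s)$ then has realization $(A_H,B_H,C_H,I)$ with $A_H=-L_e(\mc F)R(W+PK_1P^T)R^T$, $B_H=L_e(\mc F)RP$, $C_H=KP^TR^T$, and feedthrough $D_H+D_H^T=2I>0$. Moreover, because $M_{11}(0)=P^TR^T(RWR^T)^{-1}RP$, a direct small-gain perturbation argument shows that $R(W+PK_1P^T)R^T>0$, so $A_H$ is Hurwitz. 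This gives stability of the transformed loop and opens the way to applying Lemma \ref{lem:SPR}.

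To verify SPR, I seek $P_0>0$ solving $A_H^TP_0+P_0A_H+\tfrac{1}{2}(P_0B_H-C_H^T)(P_0B_H-C_H^T)^T<0$. Exploiting the edge-Laplacian structure, the natural candidate is $P_0=L_e(\mc F)^{-1}$, which is positive definite since $L_e(\mc F)>0$ on any spanning forest. A direct substitution gives $A_H^TP_0+P_0A_H=-2R(W+PK_1P^T)R^T$ and $P_0B_H-C_H^T=RP(I-K)$, so the Riccati inequality reduces, after Schur complementation on the range of $R^T$, to the algebraic condition $2(W+PK_1P^T)>\tfrac{1}{2}P(I-K)^2P^T$. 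Completing the square in $K$ and using the small-gain bound (1) to absorb the residual $K_1$-dependent cross term recasts this as $2W+P(K^2-2K-I)P^T>0$. Factoring the quadratic $K^2-2K-I=(K-(1-\sqrt{2})I)(K-(1+\sqrt{2})I)$ recovers condition (2) exactly. Lemma \ref{lem:SPR} then gives SPR of $H(s)$, and Theorem \ref{thm:sectorstable} concludes asymptotic stability for every $\Phi\in{\bf \Phi}$.

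The main obstacle is this final algebraic step. The bare KYP Riccati produces $(I-K)^2$ together with a $K_1$-dependent offset, whereas the theorem statement depends only on the sector width $K=K_2-K_1$. The appearance of the constants $1\pm\sqrt{2}$ (the roots of $t^2-2t-1=0$) indicates that the correct Lyapunov matrix is not $L_e(\mc F)^{-1}$ itself but a scaled version $P_0=\tau L_e(\mc F)^{-1}$ for an appropriately optimized $\tau>0$, balancing the $\tfrac{\tau}{2}I$ and $\tfrac{1}{2\tau}K^2$ contributions that arise upon completion of squares in the Riccati. Identifying this $\tau$ together with the S-procedure multiplier that uses (1) to eliminate the $K_1$ dependence is the non-trivial computation underlying the theorem.
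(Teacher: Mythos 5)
Your overall route is the same as the paper's: apply the multivariable circle criterion (Theorem \ref{thm:sectorstable}) to the loop-transformed system $(I+K_2M_{11}(s))(I+K_1M_{11}(s))^{-1}$, use Proposition \ref{prop:tf_norm} to turn well-posedness of the transformation into the small-gain bound on $\|K_1\|$, write down the same state-space realization $\tilde A=-L_e(\mc F)\Rf(W+PK_1P^T)\Rf^T$, $\tilde B=L_e(\mc F)\Rf P$, $\tilde C=KP^T\Rf^T$, $\tilde D=I$, and certify strict positive realness through Lemma \ref{lem:SPR} with the candidate $X=L_e(\mc F)^{-1}$. Up to and including the Riccati residual
\[
-\tilde A^TX-X\tilde A-\tfrac12\bigl(X\tilde B-\tilde C^T\bigr)\bigl(X\tilde B-\tilde C^T\bigr)^T=\Rf\Bigl(2\bigl(W+PK_1P^T\bigr)-\tfrac12 P(I-K)^2P^T\Bigr)\Rf^T,
\]
your derivation is correct and coincides with the paper's construction.

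The gap is the final step. You assert that ``completing the square in $K$ and using the small-gain bound to absorb the residual $K_1$-dependent cross term'' converts $2(W+PK_1P^T)>\tfrac12P(I-K)^2P^T$ into the stated condition $2W+P(K^2-2K-I)P^T>0$. It cannot: in the quantity you actually need to be positive, $K^2$ enters with a \emph{negative} coefficient ($-\tfrac12PK^2P^T$), while in the target condition it enters positively, and neither completing squares, nor an S-procedure multiplier, nor the rescaling $X=\tau L_e(\mc F)^{-1}$ you propose can flip that sign --- with the scaled candidate the residual becomes $\Rf\bigl(2\tau(W+PK_1P^T)-\tfrac12P(\tau I-K)^2P^T\bigr)\Rf^T$, whose $K^2$ coefficient is $-\tfrac{1}{2\tau}<0$ for every $\tau>0$. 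So the decisive algebraic step is not established, and the mechanism you sketch for it cannot close it. For what it is worth, you have located a genuine soft spot: the paper's own proof merely asserts a certificate $Y=\Rf Q\Rf^T$ with $Q=2W-PP^T-PK^2P^T+2PKP^T=2W-P(K-I)^2P^T$, which also does not coincide with the displayed condition $2W+P(K^2-2K-I)P^T>0$. To repair your argument you would need either to exhibit an explicit pair $(X,Y)$ satisfying the equality in Lemma \ref{lem:SPR}(ii) whose positivity is exactly the stated hypothesis, or to prove directly that the stated hypothesis (together with the small-gain bound on $K_1$) implies positivity of the residual you computed.
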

\begin{proof}
From Lemma \ref{lem:SPR} it follows that a necessary condition for $(I+K_2M_{11}(s))(I+K_1M_{11}(s))^{-1}$ to be strictly positive real is for $(I+K_1M_{11}(s))^{-1}$ to have a stable and proper inverse.  Since $K_1$ may have negative entries, we arrive at the small-gain result on the largest singular value $K_1$ to guarantee the inverse exists.  
Next, it can be shown that a minimal state-space realization of the transfer function $(I+K_2M_{11}(s))(I+K_1M_{11}(s))^{-1}$ can be expressed by the state-space matrices 
\beas
\tilde{A} &\hspace{-8pt}=&\hspace{-8pt} -L_{\ms ess}(\mc{F})-L_e(\mc{F})\Rf P K_1 P^T\Rf^T \\
\tilde{B} &\hspace{-8pt}=&\hspace{-8pt} L_e(\mc{F})\Rf P, \, \tilde{C} = (K_2-K_1)P^T \Rf^T, \, \tilde{D} = I.
\eeas
Using Theorem \ref{thm:sectorstable}, it can be verified that choosing $X = L_e(\mc{F})^{-1}$ and $Y = \Rf Q \Rf^T$ with
$$Q = 2W-PP^T -P(K_2-K_1)^2P+2P(K_2-K_1)P^T$$
satisfies the equality stated in condition (ii).  Thus, $Y > 0$  if and only if $Q > 0$ concluding the proof.\hfill \QEDclosed
\end{proof}

As expected, the agreement protocol with sector non-linearities on the edges will inherit certain robustness measures found in the case where there are additive perturbations.  We can understand the above result better when considering the case where $|\mc{E}_{\ms \Delta}| = 1$; that is the nonlinearity is present on only a single edge.

\begin{corollary}\label{cor:nonlinear_singleedge}
Consider the nonlinear edge agreement protocol $\Sigma_{\ms \mc{F}}(\mc{G},\Phi)$ with $\mc{E}_{\ms \Delta} = \{\{u,v\}\}$ (i.e., $|\mc{E}_{\ms \Delta}| = 1$) and assume $\Sigma(\mc{G})$ is nominally stable.  Then $\Sigma_{\ms \mc{F}}(\mc{G},\Phi)$ is asymptotically stable for all $\Phi \in {\bf \Phi}$ satisfying
$$|\alpha| < \mc{R}_{uv}^{-1}(\mc{G}) \mbox{ and } ((\beta-\alpha)^2-2(\beta-\alpha)-1)> -2w_{uv}.$$ 
\end{corollary}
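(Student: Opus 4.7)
The plan is to specialize the preceding theorem on the robust stability of $\Sigma_{\ms \mc{F}}(\mc{G},\Phi)$ to the single-edge case $|\mc{E}_{\ms \Delta}|=1$, at which point both hypotheses of that theorem collapse to scalar inequalities that match the two displayed conditions in the corollary.

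For the first inequality, with $\mc{E}_{\ms \Delta}=\{(u,v)\}$ the selector $P \in \reals^{|\mc{E}|}$ is the indicator column of the uncertain edge and $K_1 = \alpha$ is a scalar, so $\|K_1\| = |\alpha|$. The earlier Theorem \ref{thm:oneuncertain} already identified $\|M_{11}(s)\|_\infty$ as the effective resistance $\mc{R}_{uv}(\mc{G})$ in exactly this setting (the transfer function $M_{11}$ depends only on the uncertainty structure, not on whether the loop is closed by a linear $\Delta$ or a nonlinear $\Phi$). Consequently the small-gain hypothesis $\|K_1\| < \|M_{11}(s)\|_\infty^{-1}$ of the general nonlinear theorem collapses to $|\alpha| < \mc{R}_{uv}^{-1}(\mc{G})$, which is the first stated condition.

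For the second inequality, I would exploit that $K = \beta-\alpha$ is now a real scalar and $PP^T$ is the diagonal matrix with a single $1$ in the $(uv,uv)$ slot. The algebraic identity $(1-\sqrt{2})(1+\sqrt{2}) = -1$ gives the scalar factorization
\[
(K - (1-\sqrt{2}))(K-(1+\sqrt{2})) = K^2 - 2K - 1 = (\beta-\alpha)^2 - 2(\beta-\alpha) - 1,
\]
so the matrix inequality $2W + P(K-(1-\sqrt{2})I)(K-(1+\sqrt{2})I)P^T > 0$ becomes a diagonal LMI. Off the $(uv,uv)$ entry it reduces to $2w_k > 0$, automatic from positivity of the nominal weights, while at $(uv,uv)$ it reads $2w_{uv} + (\beta-\alpha)^2 - 2(\beta-\alpha) - 1 > 0$, which rearranges to the claimed $(\beta-\alpha)^2 - 2(\beta-\alpha) - 1 > -2w_{uv}$.

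The only genuine step that requires care is the scalar identity for the factored quadratic; the roots $1\pm\sqrt{2}$ in the general theorem were engineered precisely so that $K^2 - 2K - 1$ appears, yielding the clean single-edge criterion. Everything else is bookkeeping: recognizing that under the single-edge restriction both hypotheses of the general theorem act only on the $(uv,uv)$ coordinate, so no new Lyapunov, positive-real, or fixed-point argument is needed beyond what has already been invoked in the general theorem's proof.
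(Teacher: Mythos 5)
Your proposal is correct and is exactly the argument the paper intends: the corollary is stated without proof as a direct specialization of the preceding theorem, and your two reductions --- identifying $\|M_{11}(s)\|_{\infty}=\mc{R}_{uv}(\mc{G})$ via Proposition \ref{prop:tf_norm} and Theorem \ref{thm:oneuncertain}, and collapsing the matrix condition $2W+P(K-(1-\sqrt{2})I)(K-(1+\sqrt{2})I)P^T>0$ to its single nontrivial diagonal entry --- are precisely the intended steps. The only implicit assumption you (and the paper) rely on is that the remaining diagonal entries $2w_k$ are positive, which is consistent with the paper's standing setup of positive nominal weights.
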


The result of Corollary \ref{cor:nonlinear_singleedge} can also be interpreted in terms of passivity indices quantifying the shortage or excess of passivity in the system.  Indeed, Corollary \ref{cor:nonlinear_singleedge} can be used to conclude that the effective resistance between a pair of nodes with non-linear couplings represents an \emph{excess of passivity} for the system.

\section{Simulation Example}\label{sec:sims}
We now briefly report on a numerical simulation illustrating the main results of this work.  Figure \ref{fig:75node} shows a random geometric graph with 75 nodes.  The edge weights are taken to be the inverse of the Euclidean distance between neighboring nodes.  With $\mc{E}_{\ms \Delta} = \mc{E}$, Corollary \ref{cor:smallest} states that the robustness margin corresponds to the inverse of the largest effective resistance between nodes in $\mc{E}_{\ms \Delta}$, which in this example corresponds to the edge connecting the two black nodes in Figure \ref{fig:75node}; the effective resistance between these nodes is computed to be 0.429.  Figure \ref{fig:cluster_negweight} shows the resulting trajectories when the uncertainty on that edge exactly equals the inverse of the effective resistance, leading to a \emph{clustering phenomena}.  This was examined in more detail in \cite{Zelazo2014}.

Using the same graph, Figures \ref{fig:sectors} shows two non-linear coupling functions used across the uncertain edge highlighted earlier ($y = ax + \sin{x}$).  Choosing the nonlinearity satisfying the sector condition from Corollary \ref{cor:nonlinear_singleedge} ($a<-2.035$) leads to stable trajectories (Figure \ref{fig:sector_stable}).  On the other hand, choosing $a=-3$ (red line in Figure \ref{fig:sectors}) leads to unstable dynamics.

\begin{figure*}[!t]
\begin{center}
	\subfigure[Random geometric graph.] {\scalebox{.4}{\includegraphics{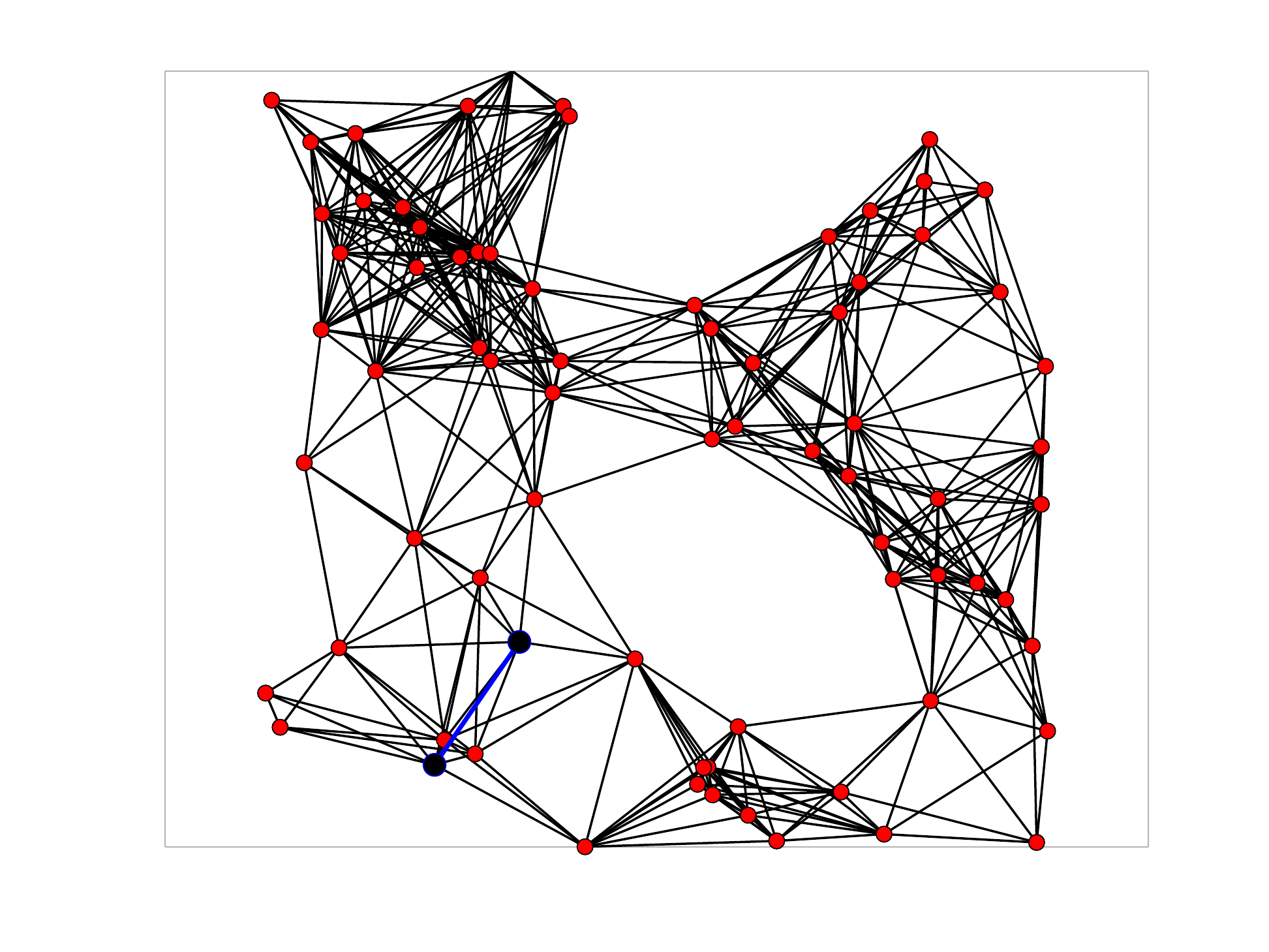}}\label{fig:75node}}
	\subfigure[Clustering results from an edge weight perturbation.]{\scalebox{.4}{\includegraphics{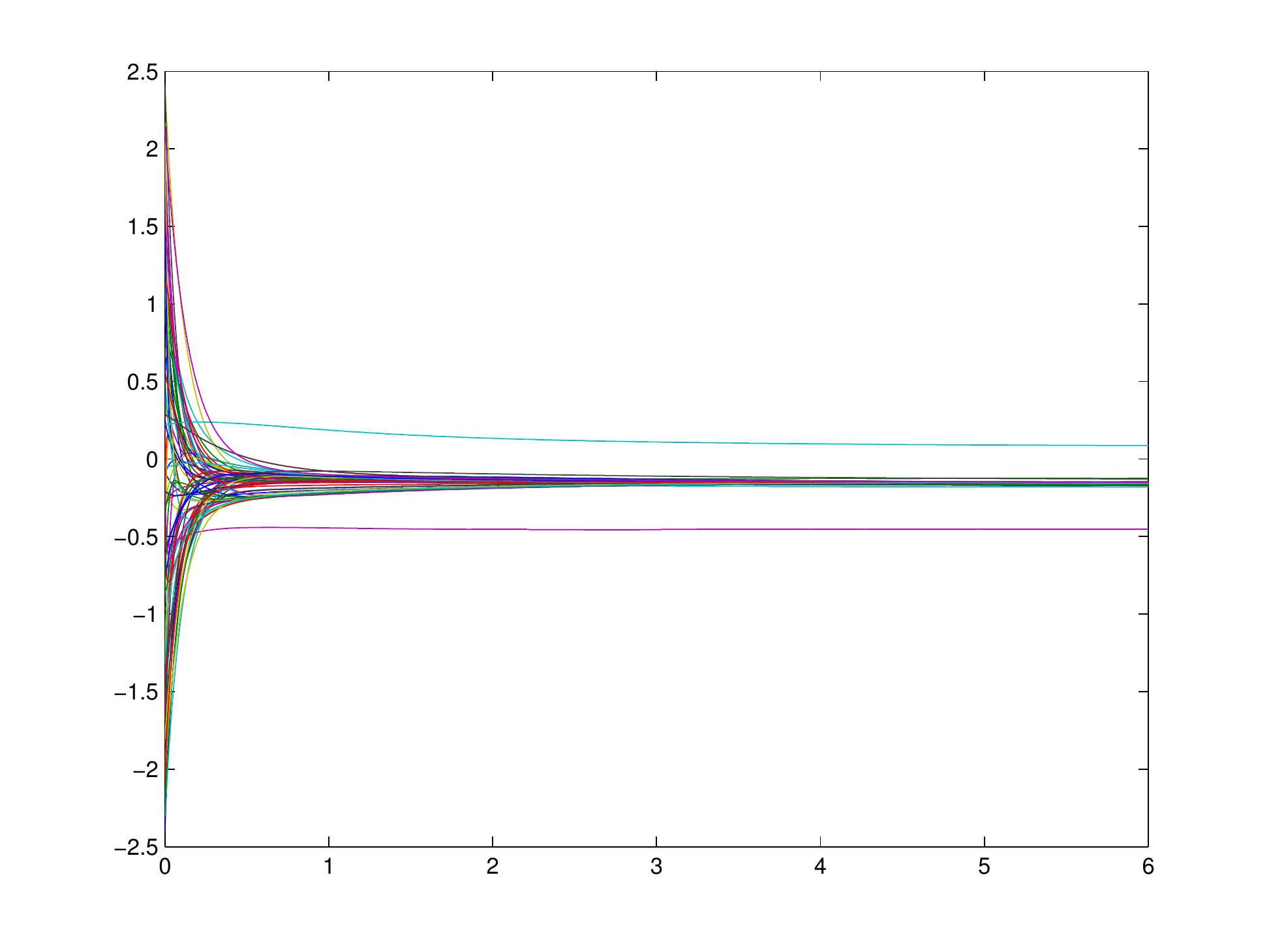}}\label{fig:cluster_negweight}}
	\subfigure[Nonlinear coupling functions.]{\scalebox{.4}{\includegraphics{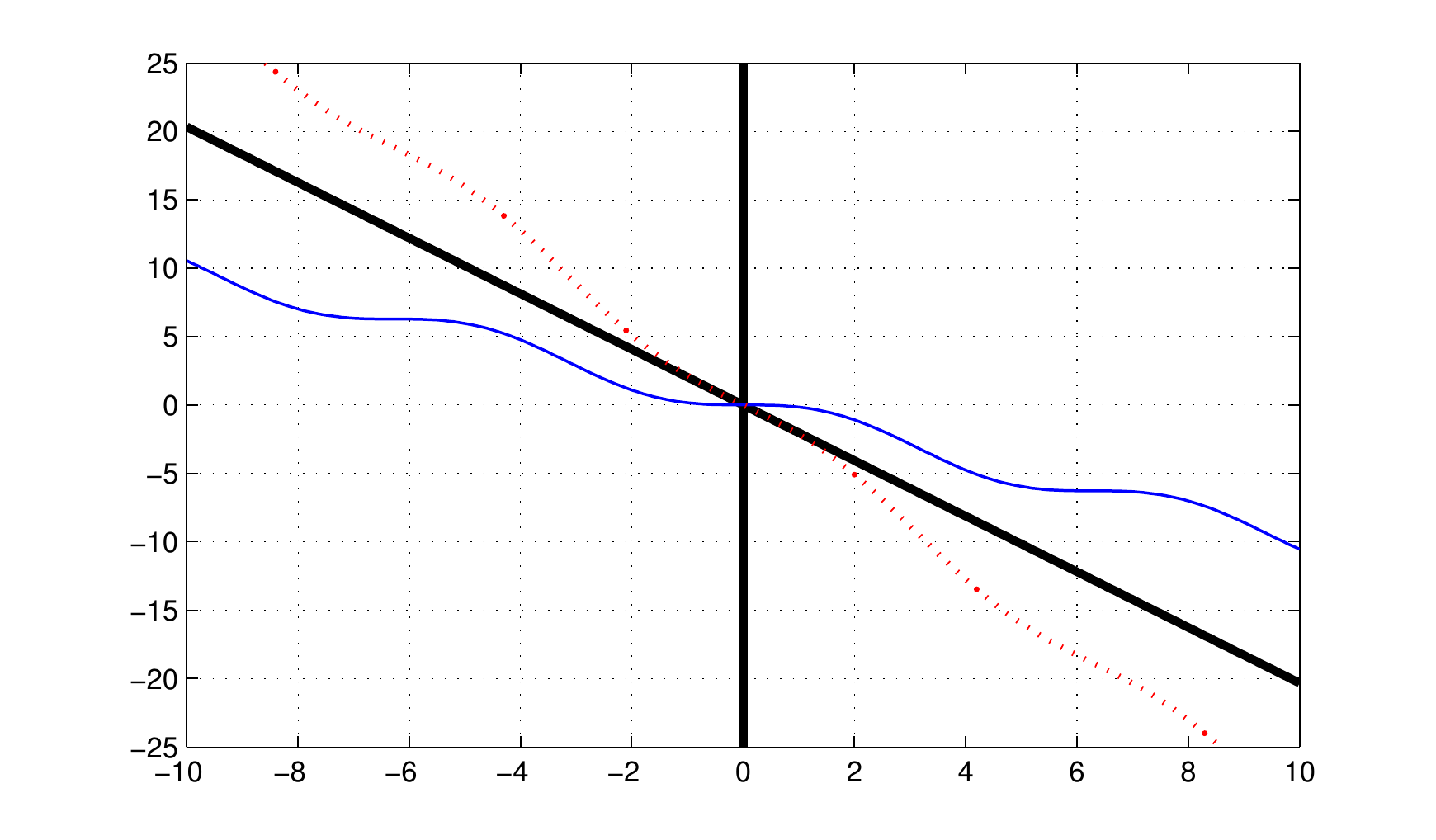}}\label{fig:sectors}}
	\subfigure[Nonlinearity satisfying Corollary \ref{cor:nonlinear_singleedge}.]{\scalebox{.4}{\includegraphics{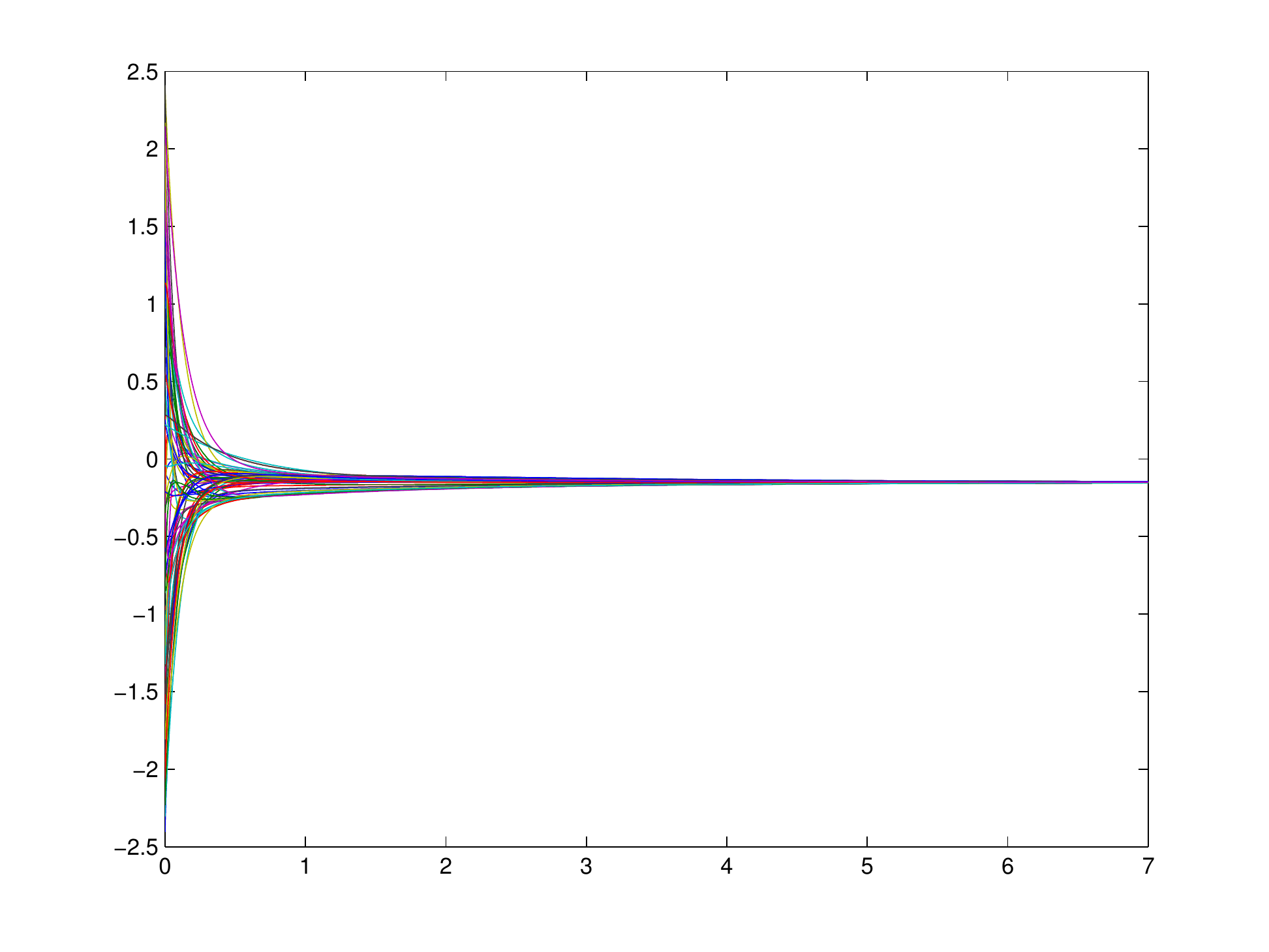}}\label{fig:sector_stable}}
  \caption{A network with an uncertain edge weight.}\label{fig:negweights_consensus}
\end{center}
\end{figure*}


\section{Concluding Remarks}\label{sec:conclusion}
This work examined the robust stability of weighted consensus protocols with bounded additive uncertainties on the edge weights.  The main results demonstrate that the robustness margins of such systems are determined by both the combinatorial properties of the uncertain edges (i.e., where in the network they are located), and the nominal magnitude of the edge weights.  These margins were related to the notion of the effective resistance in a network, and the robust stability results were cast in this framework.  We believe that this framework provides a new graph-theoretic interpretation for classical notions from robust control theory when applied to networked systems.  The ability for this framework to also handle non-linear extensions suggest a greater utility and our future works aim to apply these results to certain real-world applications.

{
\section*{Acknowledgements}
The work presented here has been supported by the Arlene \& Arnold Goldstein Center at the Technion Autonomous System Program (TASP) and the Israel Science Foundation.

{
 \bibliographystyle{IEEEtran}
\bibliography{LaplacianMatrix}
}
}
\end{document}